\newtheorem{theorem}{Theorem}[section]
\newtheorem{lemma}[theorem]{Lemma}
\newtheorem{proposition}[theorem]{Proposition}
\newtheorem{corollary}[theorem]{Corollary}
\newtheorem*{thm:main1}{Theorem~\ref{thm:main1}}
\newtheorem*{thm:main2}{Theorem~\ref{thm:main2}}
\newcommand\abs[1]{\lvert #1\rvert}
\newcommand\F{\mathbb F}
\newcommand\I{\mathcal I}
\newcommand\spn[1]{\langle #1\rangle}
\newcommand\tle\preccurlyeq
\newcommand\FS{\operatorname{FS}_k}
\newcommand\FST{\operatorname{FS}_{2k}}
\newcommand \tplus \oplus
\newcommand\pivot\wedge
\begin{document}
\title{Obstructions for matroids of path-width at most~$k$ and graphs of linear rank-width at most~$k$\thanks{An extended abstract appeared in \cite{KKKO2022}.}}

\author{Mamadou Moustapha Kant\'e\thanks{Supported by the grant from the French National Research Agency under JCJC program (ASSK: ANR-18-CE40-0025-01).}}
\affil{\small Université Clermont Auvergne, Clermont Auvergne INP, LIMOS, CNRS, Aubière, France.}
\author[$\dagger$2]{Eun Jung Kim%
}
\affil{\small Universit\'{e} Paris-Dauphine, PSL  University, CNRS, UMR 7243, LAMSADE, Paris, France.}
\author[3,4]{O-joung Kwon\thanks{Supported by the Institute for Basic Science (IBS-R029-C1).}\thanks{Supported by the National Research Foundation of Korea (NRF) funded by the Ministry of Education (No. NRF-2018R1D1A1B07050294)
and the Ministry of Science and ICT (No. NRF-2021K2A9A2A11101617).}}

\affil[3]{\small Department of Mathematics, Hanyang University, Seoul,~South~Korea.}

\affil[4]{\small Discrete Mathematics Group, Institute~for~Basic~Science~(IBS), Daejeon,~South~Korea.}

\author[$\ddagger$4,5]{Sang-il~Oum}
\affil[5]{\small Department of Mathematical Sciences, KAIST,  Daejeon,~South~Korea.}
\affil[ ]{E-mail addresses: \texttt{mamadou.kante@uca.fr},
\texttt{eunjungkim78@gmail.com}, \texttt{ojoungkwon@hanyang.ac.kr},
\texttt{sangil@ibs.re.kr}}

\date\today
\maketitle

\begin{abstract}
    Every minor-closed class of matroids of bounded branch-width can be characterized by a list of excluded minors, but unlike graphs, 
    this list may need to be infinite in general. 
    However, for each fixed finite field $\mathbb F$, 
    the list needs to contain only finitely many $\mathbb F$-representable matroids,
    due to the well-quasi-ordering of $\mathbb F$-representable matroids of bounded branch-width under taking matroid minors [J. F. Geelen,
    A. M. H. Gerards, and G. Whittle (2002)]. 
    But this proof is non-constructive and does not provide any algorithm for computing
    these $\mathbb F$-representable excluded minors in general. 

    We consider the class of matroids of path-width at most $k$ for fixed $k$.
    We 
    prove that for a finite field $\mathbb F$, every $\mathbb F$-representable excluded minor for the class of matroids of path-width at most~$k$ has at most
    $2^{|\mathbb{F}|^{O(k^2)}}$ elements.  
    We can therefore compute, for any integer $k$ and a fixed finite field $\mathbb F$, the set of $\mathbb F$-representable excluded minors for  the class of matroids of
    path-width $k$, and this gives as a corollary a polynomial-time algorithm for checking whether the path-width of an $\mathbb F$-represented matroid is at most $k$. 
    We also prove that every excluded pivot-minor for the class of graphs having linear rank-width at most $k$ has at most $2^{2^{O(k^2)}}$ vertices,
    which also results in a similar algorithmic consequence for linear rank-width of graphs.
\end{abstract}

\section{Introduction}
For a minor-closed class $\mathcal C$ of graphs or matroids, a graph or a matroid is an \emph{excluded minor} for $\mathcal C$ if it does not belong to $\mathcal C$ but all of
its proper minors belong to $\mathcal C$.

Robertson and Seymour~\cite{RS2004a} proved that every minor-closed class of graphs has finitely many excluded minors. 
This deep theorem has many algorithmic consequences for minor-closed classes of graphs. 
One of the corollaries is that 
for each minor-closed class $\mathcal I$ of graphs, 
there exists a  monadic second-order formula $\varphi_{\mathcal I}$
that expresses the membership in $\mathcal I$,
as there is a formula to decide whether a graph has a minor isomorphic to a fixed graph. 
However, the proof of Robertson-Seymour theorem is non-constructive and provides no  
algorithm for constructing the list of excluded minors
and therefore we only know the existence of $\varphi_{\mathcal I}$ and do not know how to construct $\varphi_{\mathcal  I}$ in general.

The class of graphs of path-width at most~$k$ is minor-closed and therefore 
the list of excluded minors for the class of graphs of path-width at most~$k$ is finite for each $k$. 
Actually, this is also implied by an earlier theorem of Robertson and Seymour~\cite{RS1990}, stating that graphs of bounded tree-width are well-quasi-ordered under taking minors. But this is still non-constructive.
In 1998, Lagergren~\cite{Lagergren1998} proved that each excluded minor for the class of graphs of path-width at most~$k$ has at most $2^{O(k^4)}$ edges.
Therefore we can now construct a monadic second-order formula $\varphi_k$ to decide whether the path-width of a graph is at most~$k$ for each $k$.
Since Courcelle's theorem~\cite{Courcelle1990} allows us to decide $\varphi_k$ on graphs of bounded tree-width in polynomial time, 
we obtain a polynomial-time algorithm to decide whether an input graph has path-width at most~$k$ for each fixed~$k$, 
even though a direct algorithm was proposed by Bodlaender and Kloks~\cite{BK1996}.

We aim to prove analogous theorems for the class of matroids of path-width at most~$k$ and for the class of graphs of linear rank-width at most~$k$.
For a matroid $M$ on the ground set $E(M)$, we define its connectivity function
$\lambda_M$ by \[\lambda_M(X)=r_M(X)+r_M(E(M)-X)-r(M)\quad\text{ for }X\subseteq E(M),\] where $r_M$ is the rank function of $M$.
The path-width of a matroid $M$ is defined as the minimum \emph{width} of  linear orderings of its elements, called \emph{path\hyp{}decompositions} or  \emph{linear layouts}, where the width of a path\hyp{}decomposition $e_1,e_2,\ldots,e_n$ is defined as the maximum of the values $\lambda_M(\{e_1,e_2,\ldots,e_i\})$ for all $i=1,2,\ldots,n$.

For matroid path-width, we do not yet know whether there are only finitely many excluded minors for the class of matroids of path-width at most~$k$. 
Previously, 
Koutsonas, Thilikos, and Yamazaki~\cite{KTY2011} showed a lower bound, proving that the number of excluded minors for the class of  matroids of path-width at most~$k$ is at least $(k!)^2$.
We remark that a class of matroids of bounded path-width 
is not necessarily well-quasi-ordered under taking minors;
Geelen, Gerards, and Whittle~\cite{GGW2002} showed that there is an infinite antichain of matroids of bounded path-width.

Geelen, Gerards, and Whittle~\cite{GGW2002} proved that for each finite field $\F$, $\F$\hyp{}representable matroids of bounded branch-width are well-quasi-ordered under taking minors, as a generalization of the theorem of Robertson and Seymour~\cite{RS1990} on graphs of bounded tree-width.
This implies that for each finite field $\F$, there are only finitely many $\F$\hyp{}representable excluded minors for the class of matroids of path-width at most~$k$. 

As a corollary, for each finite field $\F$ and an integer~$k$, there exists a monadic second-order formula $\varphi^\F_{k}$ to decide whether an $\F$-representable matroid has path-width at most~$k$, because one can write a monadic second-order formula to describe whether a matroid has a fixed matroid as a minor by Hlin\v{e}n\'y~\cite{Hlineny2006}.
Hlin\v{e}n\'y~\cite{Hlineny2006} also proved an analog of Courcelle's theorem for $\F$-represented matroids, 
showing  a fixed-parameter algorithm  to 
decide a monadic second-order formula 
on $\F$-represented matroids of bounded branch-width, for a finite field $\F$.
This allows us to conclude that there exists a fixed-parameter tractable algorithm to decide whether an input $\F$-represented matroid has path-width at most~$k$ by testing $\varphi^\F_k$.

However, the theorem of Geelen, Gerards, and Whittle~\cite{GGW2002} does not provide any method of constructing the list of  $\F$\hyp{}representable excluded minors
and so we did not know how to find $\varphi^\F_k$.
We are now ready to state our main theorem, showing an explicit upper bound of the size of every $\F$\hyp{}representable excluded minor.
\begin{theorem}\label{thm:main}
    For a finite field $\F$ and an integer $k$,
    each $\F$\hyp{}representable excluded minor for the class of matroids of path-width at most~$k$ has at most $2^{\abs{\F}^{O(k^2)}}$ elements.
\end{theorem}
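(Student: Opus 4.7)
The plan is to follow the broad strategy of Lagergren's theorem on graph path-width, adapting it to $\F$-representable matroids. Fix an $\F$-representable excluded minor $M$ for the class of matroids of path-width at most $k$; the goal is to show $\abs{E(M)}\le 2^{\abs{\F}^{O(k^2)}}$. By minimality, for every $e\in E(M)$, both $M\setminus e$ and $M/e$ have path-width at most $k$, so one may fix a width-$k$ path-decomposition $\sigma_e$ of $M\setminus e$ (or of $M/e$, whichever is more convenient at $e$). These decompositions form the starting data for the argument.

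The first substantive step is to define a notion of \emph{type} for a cut $X$ with $\lambda_M(X)\le k$ in an $\F$-representable matroid. The type must encode the boundary subspace $\spn{X}\cap\spn{E\setminus X}$ together with enough local data about its position and orientation to describe how a newly inserted element can extend the decomposition. Because both sides of a cut project onto a common space of dimension at most $k$, and because $\F$-representability lets one normalize the matrix on each side, the type can be recorded by a constant-size object (for example, a matrix of size $O(k)\times O(k)$) over $\F$; counting such objects yields at most $\abs{\F}^{O(k^2)}$ distinct types.

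Next, associate to every $e\in E(M)$ the sequence of types of the cuts appearing in $\sigma_e$. Apply a compression in the spirit of the Bodlaender--Kloks typical sequences: collapse runs of comparable types and retain only the essential transitions. After this normalization, each element's typical sequence lies in a set of cardinality at most $2^{\abs{\F}^{O(k^2)}}$, essentially because a normalized sequence is determined by a subset of the type-alphabet together with a small amount of ordering information whose size is controlled by the same alphabet.

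The contradiction is obtained by pigeonhole: if $\abs{E(M)}>2^{\abs{\F}^{O(k^2)}}$, then there exist two distinct elements $e_1,e_2$ sharing the same typical sequence, and one should then splice $\sigma_{e_1}$ and $\sigma_{e_2}$ by identifying matching types along the common submatroid $M\setminus\{e_1,e_2\}$, producing a path-decomposition of $M$ of width at most $k$, contradicting that $M$ is an excluded minor. The hard part is this splicing step. Unlike in graphs, where the boundary at a cut is a vertex set manipulated syntactically, the boundary of a matroid cut is a subspace of an ambient coordinate space determined by the representation, and the boundaries coming from decompositions of two different minors live in different ambient spaces. Designing the normalization of types so that gluing is well-defined and width-preserving is the crux of the proof; it is also what forces the type alphabet to carry $O(k^2)$ field entries, yielding the $\abs{\F}^{O(k^2)}$ count and, after the subset-of-types compression, the final $2^{\abs{\F}^{O(k^2)}}$ bound.
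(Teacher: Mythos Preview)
Your plan has a genuine gap at the splicing step, and the paper's proof takes a fundamentally different route that sidesteps it. You propose to take, for each $e$, a width-$k$ decomposition $\sigma_e$ of $M\setminus e$, and when two elements $e_1,e_2$ share a typical sequence, to ``splice'' $\sigma_{e_1}$ and $\sigma_{e_2}$ into a width-$k$ decomposition of $M$. But $\sigma_{e_1}$ omits $e_1$ and $\sigma_{e_2}$ omits $e_2$; agreement of their abstract typical sequences gives no mechanism for inserting both missing elements into a single linear order while keeping width $\le k$. Even if one tries to take a prefix of one and a suffix of the other, nothing guarantees that the prefix contains $e_2$ and the suffix contains $e_1$, nor that any cut of matching type occurs at compatible positions. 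Worse, the boundary subspaces you would need to match live in ambient spaces determined by two unrelated decompositions of two different minors, and you offer no tool to align them; you correctly identify this as the crux, but the proposal contains no idea for resolving it.

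The paper avoids this entirely by working with a \emph{single} linked path-decomposition of $M$ itself, of width $k+1$ (this exists by Theorem~\ref{thm:nagamochi}). Within this one ordering, Lemma~\ref{lem:cutcount} locates many nested cuts $A_1\subsetneq\cdots\subsetneq A_\ell$ of equal width~$\theta$ with $\lambda_M\ge\theta$ everywhere between $A_1$ and $A_\ell$. Tutte's linking theorem then yields disjoint $C,D$ with $C\cup D=A_\ell\setminus A_1$ realizing connectivity $\theta$ between $A_1$ and $E\setminus A_\ell$, and Proposition~\ref{prop:stronglinking} shows that the quotient by $\spn{C}$ sends every boundary $\partial_A(A_i)$ bijectively onto one fixed $\theta$-dimensional space $B$. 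The ``type'' of $A_i$ is then the image of the full set $\FS(A_i,\partial_A(A_i))$ of Jeong--Kim--Oum inside $U_k(B)$; pigeonhole on subsets of $U_k(B)$ gives $i<j$ with equal types, and Lemma~\ref{lem:fullsetkey} shows that the proper minor $M/(C\cap(A_j\setminus A_i))\setminus(D\cap(A_j\setminus A_i))$ still has path-width $>k$, the desired contradiction. The linking theorem is precisely what makes the boundary alignment canonical across all the $A_i$; your scheme, with boundaries coming from independent decompositions of different minors, has no substitute for it.
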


Thus, by Theorem~\ref{thm:main}, we have an algorithm to construct $\varphi^\F_k$
and we have a fixed-parameter  algorithm to decide   whether an input $\F$\hyp{}represented matroid has path-width at most~$k$. 
Note that there is a subtle difference between ``have'' and ``there exist''; by Geelen, Gerards, and Whittle~~\cite{GGW2002}, we knew that there exists $\varphi^\F_k$, but we did not know how to construct it, because their proof is non-constructive. By Theorem~\ref{thm:main} we can enumerate all matroids of small size to find the list of all $\F$\hyp{}representable excluded minors
and therefore we can finally construct $\varphi^\F_k$.

We remark that Geelen, Gerards, Robertson, and Whittle~\cite{GGRW2003a} showed an analogous theorem for branch-width of matroids; for each $k\ge 1$, every excluded minor for the class of matroids of branch-width at most~$k$ has at most $(6^{k+1}-1)/5$ elements.\footnote{In \cite{GGRW2003a}, the connectivity function of matroids is defined to have $+1$, which makes $(6^k-1)/5$.}

By extending our method slightly, we also prove a similar theorem for the linear rank-width of graphs as follows.
\begin{theorem}\label{thm:main2}
    Each excluded pivot-minor for the class of graphs of linear rank-width at most~$k$ has at most $2^{2^{O(k^2)}}$ vertices.
\end{theorem}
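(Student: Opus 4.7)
The plan mirrors the proof of Theorem~\ref{thm:main}, exploiting the classical translation between simple graphs under pivot-minors and binary matroids under minors. For a simple graph $G$ on $n$ vertices with adjacency matrix $A(G)$ over $\F_2$, I take $M(G)$ to be the binary matroid represented by $[I_n \mid A(G)]$; its ground set consists of two canonical copies of $V(G)$, so $|E(M(G))| = 2n$. Two standard facts drive the translation: the cut-rank function of $G$ agrees, up to a factor of two on suitably paired subsets, with the connectivity function $\lambda_{M(G)}$, so the linear rank-width of $G$ and the path-width of $M(G)$ differ by at most an additive constant; and pivoting an edge and deleting a vertex in $G$ is realized by a basis exchange followed by deleting or contracting a paired element in $M(G)$.

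Starting from an excluded pivot-minor $G$ for linear rank-width at most $k$, the matroid $M(G)$ therefore has path-width at least $k+1-O(1)$, while its minors corresponding to pivot-minors of $G$ all have path-width at most $k$. The plan is to rerun the argument of Theorem~\ref{thm:main} on $M(G)$ with $\F = \F_2$: analyze an optimal path-decomposition, classify the boundary profiles (of which there are at most $|\F_2|^{O(k)} = 2^{O(k)}$), and apply the minimality step to conclude that $|E(M(G))| \le 2^{|\F_2|^{O(k^2)}} = 2^{2^{O(k^2)}}$. Since $|V(G)| = \tfrac12\,|E(M(G))|$, the target bound on the number of vertices follows.

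The principal obstacle, presumably the \emph{slight extension} alluded to by the authors, is that the proof of Theorem~\ref{thm:main} may take arbitrary matroid minors of the extremal matroid, whereas only pairing-respecting minors of $M(G)$ correspond to pivot-minors of $G$. Resolving this requires either verifying that the specific reductions used in Theorem~\ref{thm:main} can be chosen to act on paired elements when the starting matroid has the form $M(G)$, or working throughout with an enriched class of paired binary matroids whose minor operations respect the pairing. I expect this adaptation, rather than the underlying counting argument, to consume most of the technical effort.
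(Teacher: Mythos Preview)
Your proposal correctly identifies the translation to binary matroids and correctly isolates the main obstacle, but it stops short of a proof at exactly the point where the real work lies. Two concrete issues:

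First, the sentence ``the linear rank-width of $G$ and the path-width of $M(G)$ differ by at most an additive constant'' is not justified by the factor-of-two relation on paired cuts. The matroid path-width of $M(G)$ minimizes over \emph{all} orderings of the $2n$ elements, not only pairing-respecting ones, so knowing $\lambda_{M(G)}(I_X\cup A_X)=2\rho_G(X)$ tells you one inequality but not the other. The paper avoids this entirely by working with the \emph{subspace arrangement} $\mathcal V_G=\{\langle e_i,v_i\rangle\}_{i\in V(G)}$, whose elements are already the $2$-dimensional paired subspaces; Lemma~\ref{lem:cutrank} then gives the exact identity $\mathrm{pw}(\mathcal V_G)=2\,\mathrm{lrw}(G)$, with no slack to manage.

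Second, and more seriously, you name the pairing obstacle but offer only two possible resolutions without executing either. The paper's actual mechanism is the one you do not mention: Oum's analogue of Tutte's linking theorem for pivot-minors (Theorem~\ref{thm:oumlink}). This is what guarantees that the linking minor on $S\cup T$ can be taken to be a pivot-minor of $G$; after applying the necessary pivots one may assume it is simply the induced subgraph $G[S\cup T]$. Once that is in place, Proposition~\ref{prop:stronglinking} is applied to the specific minor $M_G/I_C\setminus A_C$ with $C=V(G)\setminus(S\cup T)$, which automatically respects the pairing (Proposition~\ref{prop:stronglinking2}), and the quotient map $\phi$ by $\langle I_{C'}\rangle$ corresponds exactly to deleting the vertices of $C'$ from $G$. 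So the proper minor produced at the end of the argument is an induced subgraph of a pivot-equivalent graph, i.e.\ a genuine proper pivot-minor. Without invoking Theorem~\ref{thm:oumlink} you have no way to force the Tutte-linking step to produce something pairing-respecting, and the rest of the argument cannot start.

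In short: the counting and full-set machinery carry over as you expect (in the subspace-arrangement form of Lemmas~\ref{lem:shrink2}--\ref{lem:fullsetkey2}), but the ``slight extension'' is precisely replacing Tutte's linking theorem by its pivot-minor analogue, and your proposal omits that ingredient.
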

Since every vertex-minor obstruction is also a pivot-minor obstruction, we deduce the following. 
\begin{corollary}\label{cor:main2}
    Each excluded vertex-minor for the class of graphs of linear rank-width at most~$k$ has at most $2^{2^{O(k^2)}}$ vertices.
\end{corollary}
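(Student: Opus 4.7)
The plan is to deduce the corollary by a one-step reduction to Theorem~\ref{thm:main2}: every excluded vertex-minor for the class of graphs of linear rank-width at most~$k$ is also an excluded pivot-minor for the same class, and so the bound of Theorem~\ref{thm:main2} transfers verbatim.

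The key ingredient is the well-known identity that pivoting an edge $uv$ in a graph is equal to the composition of three successive local complementations, at $u$, at $v$, and at $u$. Consequently, every pivot-minor of a graph $G$ is also a vertex-minor of $G$, and every proper pivot-minor (one with strictly fewer vertices) is a proper vertex-minor. Since the class of graphs of linear rank-width at most~$k$ is closed under vertex-minors (local complementation preserves the cut-rank function, and vertex deletion cannot increase linear rank-width), it is in particular closed under pivot-minors, so both notions of excluded minor for this class are well defined.

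Now let $G$ be an excluded vertex-minor for the class of graphs of linear rank-width at most~$k$; then $G$ does not belong to the class, but every proper vertex-minor of $G$ does. Given any proper pivot-minor $H$ of $G$, the identity above exhibits $H$ as a proper vertex-minor of $G$, so $H$ belongs to the class. Hence $G$ is also an excluded pivot-minor, and Theorem~\ref{thm:main2} yields $\abs{V(G)} \le 2^{2^{O(k^2)}}$. There is no genuine obstacle here: the substance of the corollary is entirely carried by Theorem~\ref{thm:main2}, and the only work left is the tautological comparison between the vertex-minor and pivot-minor orders.
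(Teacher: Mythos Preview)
Your proposal is correct and follows exactly the paper's approach: the paper deduces the corollary in one line from Theorem~\ref{thm:main2} via the observation ``every vertex-minor obstruction is also a pivot-minor obstruction,'' and you supply precisely the standard justification for that observation (pivoting equals three local complementations, so proper pivot-minors are proper vertex-minors).
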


The situation is very similar to that of matroids representable over a fixed finite field. 
Oum~\cite{Oum2004} showed that graphs of bounded rank-width  are well-quasi-ordered under taking pivot-minors,
which implies that the list of excluded pivot-minors for the class of graphs of linear rank-width at most~$k$ is finite. 
Again its proof is non-constructive and therefore it provides no algorithm to construct the list.
Jeong, Kwon, and Oum~\cite{JKO2013,JKO2014} proved that any list of excluded pivot-minors characterizing the class of graphs of linear rank-width at most~$k$ has at least $2^{\Omega(3^k)}$ graphs.

Corollary~\ref{cor:main2} answers an open problem of Jeong, Kwon, and Oum~\cite{JKO2014} on the number of vertices of each excluded vertex-minor for the class of graphs of linear rank-width at most~$k$. Adler, Farley, and Proskurowski~\cite{AFP2013} characterized excluded vertex-minors for the class of graphs of linear rank-width at most~$1$.
Theorem~6.1 of Kant\'e and Kwon~\cite{KK2015} implies that distance-hereditary excluded vertex-minors for the class of graphs of linear rank-width at most~$k$ have at most $O(3^k)$ vertices.

Previously, we only knew the existence of a modulo-$2$ counting monadic second-order formula $\Phi_k$ testing whether a graph has linear rank-width at most~$k$.
This is due to a theorem of Courcelle and Oum~\cite{CO2004} stating that for each graph $H$, there is a modulo-$2$ counting monadic second-order formula to decide
whether a graph has a pivot-minor isomorphic to $H$.  As there is a polynomial-time algorithm to decide a modulo-$2$ counting monadic second-order formula for
graphs of bounded rank-width (see~\cite[Proposition 5.7]{CO2004}), we can conclude that there exists a polynomial-time algorithm to decide whether an input
graph has linear rank-width at most~$k$. However, this algorithm is based on the existence of $\Phi_k$, and we did not know how to construct $\Phi_k$.  Finally, by
Theorem~\ref{thm:main2}, we know how to construct~$\Phi_k$ algorithmically.

Let us now explain the main ideas. We first observe that each excluded minor $M$ has path-width $k+1$, admits a \emph{linked path-decomposition}, which is
a path-decomposition satisfying some Menger-like condition, and each proper minor of $M$ has path-width at most $k$. Secondly, we
show that each excluded minor of sufficiently large size has many nested cuts, all of the same value. We finally show that among those cuts of the same
value, there are two nested cuts $X$ and $Y$ such that $M$ has a minor on $X\cup (E(M)\setminus Y)$ of path-width $k+1$, contradicting that all proper minors of $M$
have path-width at most $k$. 

One of the key ingredients in finding the minor is to use the data structure proposed by Jeong, Kim, and Oum~\cite{JKO2016}.  Based
on dynamic programming, they devised fixed-parameter algorithms to decide whether an $\F$\hyp{}represented matroid has path-width at most~$k$ and to decide
whether a graph has linear rank-width at most~$k$ without using the fact that there are only finitely many excluded minors.  Their so-called \emph{$B$-trajectories}
encode partial solutions which may be extended to the full solutions. Here is the idea behind $B$-trajectories. If $\lambda_M(X)=k$, then the
dimension of the vector space spanned by both $X$ and $E(M)\setminus X$ is exactly $k$. Since the underlying field is finite, this intersection subspace has
only finitely many subspaces. Combining this observation with the idea of \emph{typical sequences} appearing in Bodlaender and Kloks~\cite{BK1996}, Jeong, Kim,
and Oum~\cite{JKO2016} deduce that there are only finitely many collections, called the \emph{full sets}, of meaningful partial solutions (\emph{compact
  $B$-trajectories}) at every moment of the dynamic programming algorithm. We indeed prove that among all nested cuts ensured by the large size of $M$, there
are two nested cuts $X$ and $Y$ such that the full set associated with $Y$ can be obtained by applying the same linear transformation to all compact
$B$-trajectories of the full set associated with $X$, where $B$ is the vector space spanned by both $X$ and $E(M)\setminus X$. 

The second key ingredient of our
proof is the linking theorem for minors of matroids of Tutte~\cite{Tutte1965} and a corresponding theorem for pivot-minors of graphs by Oum~\cite{Oum2004}; both
are analogs of Menger's theorem.  These linking theorems will ensure that when two nested cuts display the identical full set up to a certain linear
transformation, one can obtain a proper minor or a proper pivot-minor having the same path-width or linear rank-width, respectively.

This paper is organized as follows.
Section~\ref{sec:prelim} reviews necessary definitions and known facts on matroids, branch\hyp{}decompositions, path\hyp{}decompositions, and Tutte's linking
theorem. We review in Section~\ref{sec:full} the data structure introduced in Jeong, Kim, and Oum~\cite{JKO2016}.
Section~\ref{sec:linked} presents a lemma on finding many cuts of the same width inside a \emph{linked} path\hyp{}decomposition.
We present the proof of the main theorem in Section~\ref{sec:proof}.
In Section~\ref{sec:linearrankwidth}, we present the proof for Theorem~\ref{thm:main2} on linear rank-width of graphs.
     
\section{Preliminaries}\label{sec:prelim}
For two sets $A$ and $B$, we write $A\triangle B$ to denote $(A-B)\cup (B-A)$.
\subsection{Matroids and minors}
A \emph{matroid} is a pair $(E,\I)$ of a finite set $E$ and a set $\I$ of subsets of $E$ satisfying the following three properties:
\begin{enumerate}[({I}1)]
    \item $\emptyset\in \I$.
    \item If $X\in \I$ and $Y\subseteq X$, then $Y\in\I$.
    \item If $X, Y\in \I$ and $\abs{X}<\abs{Y}$, then there is $e\in Y-X$ such that $X\cup \{e\}\in \I$.
\end{enumerate}
A subset of $E$ is \emph{independent} if it belongs to $\I$.
The \emph{ground set} of a matroid $M=(E,\I)$ is the set $E$ denoted by $E(M)$.
A subset of $E$ is \emph{dependent} if it is not independent.

Let $M=(E,\I)$ be a matroid on $n$ elements. 
We write $\I(M)$ to denote the set of independent sets of a matroid $M$.
A \emph{base} of a matroid is a maximal independent set.
A subset of $E$ is \emph{coindependent} if it is disjoint with some base.
The \emph{rank} of a set $X$ in a matroid $M$, denoted by $r_M(X)$, is the size of a maximal 
independent subset of $X$ in $M$.
The \emph{rank} of a matroid $M$ is $r(M):=r_M(E(M))$.
The \emph{connectivity function} of a matroid $M$, denoted by $\lambda_M$ is defined as 
\[ \lambda_M(X):=r_M(X)+r_M(E(M)-X)-r(M)\] 
for all $X\subseteq E(M)$.
It is easy to verify that $\lambda_M$ is \emph{submodular}, that is 
\[ 
    \lambda_M(X)+\lambda_M(Y)\ge \lambda_M(X\cup Y)+\lambda_M(X\cap Y)
\]
for all $X, Y\subseteq E(M)$. Also observe that $\lambda_M$ is \emph{symmetric}, that is $\lambda_M(X)=\lambda_M(E(M)-X)$ for all $X\subseteq E(M)$.

For $X\subseteq E$, the \emph{restriction} $M|_X$ of a matroid $M$ on $X$ is a matroid on the ground set $X$ such that $I\subseteq X$ 
is an independent set of $M|_X$ if and only if it is an independent set of $M$. The \emph{deletion} of $X$ from $M$ 
is the restriction of $M$ on $E-X$, denoted as $M\setminus X$. Another matroid operation is a \emph{contraction}. 
The contraction of $M$ by $X$, denoted as $M/X$, is a matroid with the ground set $E-X$
such that 
a set $I\subseteq E-X$ is an independent set of  $M/X$ if and only if there exists a base $B_X$ of $M|_X$ 
such that $I\cup B_X$ is an independent set of $M$.  Note that for $Y\subseteq E-X$, 
\[r_{M/X}(Y)=r_M(Y\cup X)-r_M(X),\] 
where $r_M$ is the rank function of a matroid $M$. 
For two matroids $M,N$, we say that $N$ is a \emph{minor} of $M$ if there exist disjoint subsets $C$ and $D$ of $E(M)$ such that 
$N=M\setminus D/C$.  
A minor $N$ of $M$ is \emph{proper} if $E(N)\neq E(M)$.

The following lemma is obtained easily from the above equation on the rank of a minor. 
\begin{lemma}[Geelen, Gerards, and Whittle~{\cite[(5.3)]{GGW2002}}]\label{lem:connminor}
    Let $M=(E,\I)$ be a matroid and 
    let $X$, $C$, $D$ be disjoint subsets of $E$. 
    Then $\lambda_{M\setminus D/C}(X)\le \lambda_M(X)$.
    Furthermore, equality holds if and only if 
    $r_M(X\cup C)=r_M(X)+r_M(C)$ and $r_M(E-X)+r_M(E-D)=r_M(E)+r_M(E-(X\cup D))$.
\end{lemma}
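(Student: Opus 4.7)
The plan is to expand both $\lambda_M(X)$ and $\lambda_{M\setminus D/C}(X)$ into expressions purely in the rank function $r_M$, and then to exhibit their difference as a sum of two nonnegative slacks of submodular inequalities. The equality case will then correspond precisely to both slacks vanishing, giving the two stated conditions.

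First I would use the standard rank formula for minors, namely $r_{M\setminus D/C}(Y)=r_M(Y\cup C)-r_M(C)$ for every $Y\subseteq E-C-D$. Applying this to $Y=X$, to $Y=(E-C-D)-X$, and to $Y=E-C-D$, and plugging into the definition of the connectivity function of the minor, I obtain
\[
\lambda_{M\setminus D/C}(X)=r_M(X\cup C)+r_M(E-D-X)-r_M(C)-r_M(E-D).
\]
Subtracting this from $\lambda_M(X)=r_M(X)+r_M(E-X)-r_M(E)$ and regrouping terms, the difference $\lambda_M(X)-\lambda_{M\setminus D/C}(X)$ becomes
\[
\bigl(r_M(X)+r_M(C)-r_M(X\cup C)\bigr)+\bigl(r_M(E-X)+r_M(E-D)-r_M(E)-r_M(E-X-D)\bigr).
\]

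Next I would verify that each bracket is nonnegative by picking the right submodular pair. Since $X$ and $C$ are disjoint and $r_M(\emptyset)=0$, submodularity applied to $X$ and $C$ yields $r_M(X\cup C)\le r_M(X)+r_M(C)$, handling the first bracket. For the second bracket, the fact that $X$ and $D$ are disjoint gives $(E-X)\cup(E-D)=E$ and $(E-X)\cap(E-D)=E-(X\cup D)$, so submodularity applied to $E-X$ and $E-D$ gives $r_M(E-X)+r_M(E-D)\ge r_M(E)+r_M(E-(X\cup D))$. This establishes $\lambda_{M\setminus D/C}(X)\le\lambda_M(X)$.

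Finally, since the sum of two nonnegative quantities is zero if and only if each summand is zero, equality holds if and only if $r_M(X\cup C)=r_M(X)+r_M(C)$ and $r_M(E-X)+r_M(E-D)=r_M(E)+r_M(E-(X\cup D))$, matching exactly the two conditions in the statement. The whole argument is a routine rank calculation, so there is no serious obstacle; the only real point of care is choosing the second submodular pair to be the complements $E-X$ and $E-D$ rather than, say, $E-X$ and $E-X-D$, so that the union and intersection collapse to $E$ and $E-(X\cup D)$ as required.
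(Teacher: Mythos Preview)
Your proof is correct and is exactly the routine rank computation the paper alludes to when it says the lemma ``is obtained easily from the above equation on the rank of a minor''; the paper does not spell out a proof beyond that remark and the citation to~\cite{GGW2002}. Your choice to split the difference into the two submodular slacks for the pairs $(X,C)$ and $(E-X,E-D)$ is precisely the intended argument.
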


\subsection{Vector matroids}
One of the key examples of matroids is the class of vector matroids. 
Let $A$ be an $m\times n$ matrix over a field $\F$ whose columns are indexed by a set $E$ of column labels. 
Then a matroid $M(A)$ on $E$ can be defined from $A$ so that $X$ is independent in $M(A)$ if and only if the corresponding column vectors of $A$ are linearly independent. Such a matroid $M(A)$ is called a \emph{vector matroid}
and $A$ is called a \emph{representation} of the matroid $M(A)$. 
We say that a matroid $M$ is \emph{representable} over $\F$, or equivalently \emph{$\F$\hyp{}representable} if there is a matrix $A$ over $\F$ such that $M=M(A)$.
We say a matroid $M$ is \emph{$\F$\hyp{}represented} if it is given with its representation over $\F$.

Instead of using matrices, we may regard a vector matroid 
defined from  
a finite set of labeled vectors in a vector space, called a \emph{configuration} as in \cite{GGW2002}.
For a configuration $A$, we write $M(A)$ to denote the matroid on~$A$
such that a subset of $A$ is independent in $M(A)$ if and only if it is linearly independent in the underlying vector space.
Note that vectors in a configuration may coincide as we allow two different labels to represent the same vector.
We write $\spn{A}$ to denote the linear span of the vectors in $A$. 

\subsection{Path-width}
Let $E$ be a finite set with $n$ elements. 
A function $f:2^E\to\mathbb Z$ is \emph{submodular} if $f(X)+f(Y)\ge f(X\cup Y)+f(X\cap Y)$ for all $X,Y\subseteq E$
and is \emph{symmetric} if $f(X)=f(E-X)$ for all $X\subseteq E$.
We say that a function $f:2^E\to\mathbb Z$ is a \emph{connectivity function} 
if it is submodular, symmetric, and $f(\emptyset)=0$.

A \emph{linear layout} of $E$ is a permutation $\sigma=e_1,e_2,\ldots,e_n$ of $E$.
The \emph{width} of a linear layout $\sigma=e_1,e_2,\ldots,e_n$ with
respect to~$f$ is \[ \max_{1\leq i < n} f(\{e_1,e_2,\ldots,e_i\}).\] 
The \emph{path-width} of $f$ is the minimum width of all possible linear layouts  of $E$ with respect to~$f$.

For a matroid $M$, 
the linear layout of $E(M)$ is called a \emph{path\hyp{}decomposition} of $M$
and the path-width of $M$ is defined as the path-width of $\lambda_M$.

A linear layout $\sigma=e_1,e_2,\ldots,e_n$ 
is \emph{linked}
if 
for all $0\le i<j\le n$,
\[ 
    \min_{ \{e_1,e_2,\ldots,e_i\}\subseteq X
    \subseteq\{e_1,e_2,\ldots,e_j\}}
    f(X)
=\min_{i\le \ell\le j} f(\{e_1,e_2,\ldots,e_\ell\}).
\] 
Nagamochi~\cite{Nagamochi2012} presented an algorithm that runs in polynomial time for fixed $k$
to find a linear layout of width at most~$k$
if it exists for general connectivity functions. 
Although he did not state it explicitly, 
a key step of his algorithm, \cite[Lemma 2]{Nagamochi2012}, ensures that his algorithm outputs a linked linear layout, thus proving the following theorem.

\begin{theorem}[Nagamochi~\cite{Nagamochi2012}]\label{thm:nagamochi}
    If a connectivity function $f$ on $E$ has path-width $k$, 
    then $E$ has a linked linear layout of width at most~$k$.
\end{theorem}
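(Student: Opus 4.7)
The plan is a standard lexicographic extremality argument: take a width-$k$ layout that is lex-minimal in a suitable sense, then show that any violation of the linked property would contradict the extremality, via a submodular uncrossing. Let $\sigma = e_1, \ldots, e_n$ be a linear layout of width at most $k$ (which exists by hypothesis), and write $A_\ell = \{e_1, \ldots, e_\ell\}$. Define the \emph{cut profile} of $\sigma$ as the $(n-1)$-tuple $(f(A_1), \ldots, f(A_{n-1}))$ sorted in non-increasing order. Among all width-$k$ layouts, choose $\sigma$ to lex-minimize its cut profile. I claim this $\sigma$ is already linked.

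Suppose not. Then there exist $0 \le i < j \le n$ and a set $X$ with $A_i \subseteq X \subseteq A_j$ and $f(X) < m := \min_{i \le \ell \le j} f(A_\ell)$; let $\ell^*$ attain this minimum. A preliminary submodular reduction: since
\[
f(X \cap A_{\ell^*}) + f(X \cup A_{\ell^*}) \le f(X) + m < 2m,
\]
one of the two sets on the left has $f$-value below $m$ and lies in the strictly shorter sub-interval $[i, \ell^*]$ or $[\ell^*, j]$ around $\ell^*$. Iterating shrinks $[i, j]$ until $\ell^*$ is at one of its endpoints, at which point $X$ becomes comparable with $A_{\ell^*}$. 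After possibly reversing, I may assume $X \subsetneq A_{\ell^*}$.

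Form $\sigma'$ from $\sigma$ by reordering positions $i+1, \ldots, \ell^*$ to place first the elements of $X \setminus A_i$ and then those of $A_{\ell^*} \setminus X$, each block keeping its original relative order; positions outside $[i+1, \ell^*]$ are unchanged. The cut of $\sigma'$ at position $|X|$ is exactly $X$, with $f$-value strictly below $m$. The main obstacle is to show that every other new cut in the window $(i, \ell^*)$ also has $f$-value at most $m$, so that $\sigma'$ retains width at most $k$ and its cut profile is strictly lex-smaller than that of $\sigma$ (the original window cuts all have value at least $m$, while the new ones are then all at most $m$ with at least one strictly below). For a new cut $Y$ with $A_i \subseteq Y \subseteq X$, submodularity applied to $Y$ and a carefully chosen original cut $A_\ell$ yields $f(Y) \le f(Y \cap A_\ell) + f(Y \cup A_\ell) - f(A_\ell)$; the sets $Y \cap A_\ell$ and $Y \cup A_\ell$ again lie in $[A_i, A_{\ell^*}]$, and are either within the profile bound $m$ (giving the desired estimate on $f(Y)$) or they supply a strictly smaller violating instance on which the argument iterates. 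The symmetric case $X \subseteq Y \subseteq A_{\ell^*}$ is handled analogously. Once this uniform bound on new cuts is established, the existence of $\sigma'$ contradicts the lex-minimality of $\sigma$, and the proof is complete.
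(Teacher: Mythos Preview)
The paper does not give its own proof of this theorem; it is simply quoted from Nagamochi. So there is nothing to compare your argument against, and the question is only whether your sketch is sound.

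The lex-minimal strategy is the right idea and can be made to work, but your execution has a real gap. The displayed inequality you invoke,
\[
f(Y)\ \le\ f(Y\cap A_\ell)+f(Y\cup A_\ell)-f(A_\ell),
\]
is submodularity in the wrong direction: for any submodular $f$ one has $f(Y)+f(A_\ell)\ge f(Y\cap A_\ell)+f(Y\cup A_\ell)$, i.e.\ the reverse inequality. This is not a typo that can be absorbed, because the entire last paragraph rests on getting an \emph{upper} bound for $f(Y)$ out of submodularity applied to $Y$ and $A_\ell$, and submodularity only gives a lower bound there. Relatedly, the target you set---that every new cut in the window has $f$-value at most $m$---is too strong. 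The old cuts $A_\ell$ in $(i,\ell^\ast)$ may have values far above $m$ (anything up to $k$), and submodularity with $X$ only gives $f(X\cap A_\ell)+f(X\cup A_\ell)\le f(X)+f(A_\ell)<m+f(A_\ell)$, which does not force either summand below $m$. The ``iterate on a strictly smaller violating instance'' clause does not rescue this: you have not specified what quantity decreases, nor why the sets $Y\cap A_\ell$, $Y\cup A_\ell$ are themselves new cuts of $\sigma'$ to which the same analysis applies.

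What does work along these lines is to aim for the weaker (and sufficient) conclusion that the sorted cut profile of $\sigma'$ is strictly lexicographically smaller than that of $\sigma$. Keeping the relative order inside each block, every new cut in the window equals $X\cap A_{p}$ for some $p$ with $e_p\in X$, or $X\cup A_{q}$ for some $q$ with $e_q\notin X$; this sets up a bijection between old positions $\{i+1,\dots,\ell^\ast\}$ and new positions. One then uses submodularity in the \emph{correct} direction, $f(X\cap A_\ell)+f(X\cup A_\ell)\le f(X)+f(A_\ell)$, together with a suitable minimality assumption on the violating triple $(i,\ell^\ast,X)$ (for instance, $j-i$ minimal), to compare the two multisets of cut values. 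That comparison is where the actual work lies, and it is precisely the step your sketch does not carry out.
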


\subsection{Tutte's linking theorem}
\begin{theorem}[Tutte~\cite{Tutte1965}]\label{thm:linking}
    Let $M$ be a matroid and $A$, $B$ be disjoint subsets of $E(M)$.
    Then 
    \[\lambda_M(X)\ge k \text{ for all }A\subseteq X\subseteq E(M)-B\]
    if and only if 
    $M$ has a minor $N$ on $A\cup B$ such that $\lambda_N(A)\ge k$.
\end{theorem}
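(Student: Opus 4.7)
My plan is to prove the two implications of Tutte's linking theorem separately, both by induction on $|E(M) - (A \cup B)|$, exploiting the rank formulas for minors and the submodularity of $r_M$.

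For the forward (``if'') direction, suppose $N = M \setminus D / C$ is a minor on $A \cup B$ with $\lambda_N(A) \ge k$. I would induct on $|C \cup D|$. The base case $C = D = \emptyset$ is immediate, as then $N = M$ and $X = A$ is the only admissible set. For the inductive step, pick any $e \in C \cup D$. If $e \in C$, then $N$ is also a minor of $M/e$ with one fewer contraction, and the inductive hypothesis applied to $M/e$ yields $\lambda_{M/e}(Y) \ge k$ for every $Y$ with $A \subseteq Y \subseteq E(M) - B - \{e\}$. For an arbitrary $X$ with $A \subseteq X \subseteq E(M) - B$ I would split cases: if $e \notin X$, the identity $\lambda_{M/e}(X) = \lambda_M(X) - [e \in \operatorname{cl}_M(X)]$ gives $\lambda_M(X) \ge \lambda_{M/e}(X) \ge k$; if $e \in X$, a direct rank-formula computation with $Y = X - \{e\}$ shows $\lambda_M(X) = \lambda_{M/e}(Y) + [e \in \operatorname{cl}_M(E(M) - X)] \ge k$. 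The case $e \in D$ is handled symmetrically, using deletion in place of contraction.

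For the backward (``only if'') direction I would again induct on $|E(M) - (A \cup B)|$. The base case $E(M) = A \cup B$ is immediate: take $N = M$ and apply the hypothesis to $X = A$. For the inductive step, pick any $e \in E(M) - (A \cup B)$. The problem reduces to the key claim that at least one of $M \setminus e$ or $M / e$ still satisfies the Tutte linking hypothesis (with the same $A$, $B$, $k$) on the smaller ground set $E(M) - \{e\}$; once this is established, the inductive hypothesis applied to the relevant minor produces the required minor of $M$ on $A \cup B$.

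The main obstacle is proving the key claim, which I would establish by contradiction. Suppose both operations fail at some $X_1, X_2$ with $A \subseteq X_i \subseteq E(M) - B - \{e\}$, $\lambda_{M/e}(X_1) < k$, and $\lambda_{M \setminus e}(X_2) < k$. The identities $\lambda_{M/e}(Y) = \lambda_M(Y) - [e \in \operatorname{cl}_M(Y)]$ and $\lambda_{M \setminus e}(Y) = \lambda_M(Y) - [e \notin \operatorname{cl}_M(E(M) - Y - \{e\})]$ together with $\lambda_M(X_i) \ge k$ force $\lambda_M(X_1) = \lambda_M(X_2) = k$ with $e \in \operatorname{cl}_M(X_1)$ and $e \notin \operatorname{cl}_M(E(M) - X_2 - \{e\})$. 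Setting $Z = X_1 \cup X_2$, monotonicity of closure gives $e \in \operatorname{cl}_M(Z)$ and $e \notin \operatorname{cl}_M(E(M) - Z - \{e\})$, while submodularity of $\lambda_M$ applied to $(X_1, X_2)$, combined with the hypothesis, forces $\lambda_M(Z) = k$. Then
\[
\lambda_M(Z \cup \{e\}) = r_M(Z) + (r_M(E(M) - Z) - 1) - r(M) = \lambda_M(Z) - 1 = k - 1,
\]
contradicting the hypothesis since $A \subseteq Z \cup \{e\} \subseteq E(M) - B$. The degenerate cases are handled by a preliminary reduction: when $e$ is a loop, $M/e = M \setminus e$ and preserves every $\lambda_M(X)$; when $e$ is a coloop, deletion preserves every $\lambda_M(X)$ for $X \subseteq E(M) - \{e\}$, so no failure can arise on that side.
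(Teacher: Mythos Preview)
The paper does not supply a proof of Theorem~\ref{thm:linking}; it is quoted as a classical result of Tutte and used as a black box. So there is no ``paper's own proof'' to compare against. Your argument is the standard proof: the ``if'' direction follows immediately from the monotonicity of connectivity under minors (which is Lemma~\ref{lem:connminor} in the paper), and the ``only if'' direction is the usual single-element induction, where submodularity together with the closure conditions on a violating pair $(X_1,X_2)$ forces $\lambda_M\bigl((X_1\cup X_2)\cup\{e\}\bigr)<k$.

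Your argument is correct in substance. One small remark: the exact identities you write, $\lambda_{M/e}(Y)=\lambda_M(Y)-[e\in\operatorname{cl}_M(Y)]$ and $\lambda_{M\setminus e}(Y)=\lambda_M(Y)-[e\notin\operatorname{cl}_M(E(M)-Y-\{e\})]$, are only literally true when $e$ is not a loop (for the first) or not a coloop (for the second); in the degenerate cases the bracket should be replaced by $0$. You do handle these cases separately in the ``only if'' direction, which is where it matters, but the ``if'' direction also uses these formulas. Fortunately, the ``if'' direction only needs the inequality $\lambda_M(X)\ge\lambda_{M/e}(X')$ (respectively for deletion), which is exactly Lemma~\ref{lem:connminor} and holds unconditionally, so no harm is done. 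It would be cleaner to invoke Lemma~\ref{lem:connminor} directly for the easy direction rather than run an induction with element-by-element identities.
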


For a configuration $A$ and $X\subseteq A$, 
let \[ \partial_A(X):=\spn{X}\cap \spn{A-X}.\]
Observe that $\lambda_{M(A)}(X)=\dim \partial_A(X)$.
The following proposition is essentially due to Geelen, Gerards, and Whittle~{\cite[(5.7)]{GGW2002}} and we modified their statement with almost the same proof.
Note that if $N=M/C\setminus D$ is a minor of $M$, then we can choose $D$ as a coindependent set in $M$ without changing $N$, see~\cite[Lemma 3.3.2]{Oxley2011a}. Thus it is easy to satisfy the requirements of the following proposition from Tutte's linking theorem.
\begin{proposition}\label{prop:stronglinking}
    Let $A$ be a configuration over a field $\F$ and 
    let $S$, $T$ be disjoint subcollections of $A$.
    Let $C$, $D$ be disjoint subcollections of $A$ such that
    $C\cup D=A-(S\cup T)$, 
    $D$ is coindependent in $M(A)$, and 
    for the minor $N=M(A)/C\setminus D$ of $M(A)$ on $S\cup T$, 
    \[ 
        \lambda_{N}(S)= \min_{S\subseteq X\subseteq A-T} \lambda_{M(A)}(X)=k.
    \]  
    Then for all subcollections $Z$ of $A$, 
    if $S\subseteq Z\subseteq A-T$ and $\lambda_{M(A)}(Z)=k$, then the following hold.
    \begin{enumerate}[(i)]
        \item For all $x,y\in \spn{Z}$, $x-y\in \spn{C}$ if and only if $x-y\in \spn {C\cap Z}$.
        \item For all $x,y\in \spn{A-Z}$, $x-y\in \spn{C}$ if and only if $x-y\in \spn{C-Z}$.
        \item For all $x,y\in \partial_A(Z)$, $x-y\in \spn{C}$ if and only if $x=y$.
        \item If $Z'$ is also a subcollection of $A$ such that $S\subseteq Z'\subseteq A-T$ and $\lambda_{M(A)}(Z')=k$, then for each $x\in\partial_A(Z')$, there is a unique $y\in \partial_A(Z)$ such that 
        $x-y\in \spn{C}$. Moreover, $x-y\in \spn{C\cap (Z\triangle  Z')}$. 
    \end{enumerate}
\end{proposition}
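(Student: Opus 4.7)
The plan is to work inside the quotient by $\spn{C}$: let $\pi$ denote the quotient map from the ambient vector space to its quotient by $\spn{C}$, so that the minor $N=M(A)/C\setminus D$ is represented by the labeled vectors $\{\pi(a):a\in S\cup T\}$ and $\lambda_N(S)=\dim(\pi(\spn{S})\cap\pi(\spn{T}))$. Because $D$ is coindependent, $r_{M(A)}(S\cup T\cup C)=r(M(A))$, which yields the clean formula
\[
\lambda_N(S)=r_{M(A)}(S\cup C)+r_{M(A)}(T\cup C)-r(M(A))-r_{M(A)}(C)
\]
and, more generally, lets me translate dimensions of images under $\pi$ into ambient ranks.

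The heart of the argument is a single dimension identity. For any $Z$ with $S\subseteq Z\subseteq A-T$, combining the modular formula $\dim(U\cap W)=\dim U+\dim W-\dim(U+W)$ with $\spn{A}=\spn{Z}+\spn{A-Z}$ gives
\[
\dim\bigl(\pi(\spn{Z})\cap\pi(\spn{A-Z})\bigr)=\lambda_{M(A)}(Z)+r_{M(A)}(C)-\dim(\spn{Z}\cap\spn{C})-\dim(\spn{A-Z}\cap\spn{C}).
\]
Since $\pi(\spn{S})\cap\pi(\spn{T})\subseteq\pi(\spn{Z})\cap\pi(\spn{A-Z})$ and the left side has dimension $\lambda_N(S)=k$, the assumption $\lambda_{M(A)}(Z)=k$ forces the double inequality
\[
r_{M(A)}(C\cap Z)+r_{M(A)}(C-Z)\;\le\;\dim(\spn{Z}\cap\spn{C})+\dim(\spn{A-Z}\cap\spn{C})\;\le\;r_{M(A)}(C),
\]
whose outer bound comes from $\spn{C\cap Z}+\spn{C-Z}=\spn{C}$ together with the trivial inclusions $\spn{C\cap Z}\subseteq\spn{Z}\cap\spn{C}$ and $\spn{C-Z}\subseteq\spn{A-Z}\cap\spn{C}$. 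All three quantities therefore coincide, which immediately yields (i) $\spn{Z}\cap\spn{C}=\spn{C\cap Z}$, (ii) $\spn{A-Z}\cap\spn{C}=\spn{C-Z}$, and the auxiliary fact $\spn{C\cap Z}\cap\spn{C-Z}=0$. Statement~(iii) is then immediate: for $x,y\in\partial_A(Z)$, the difference $x-y$ lies simultaneously in $\spn{C\cap Z}$ and $\spn{C-Z}$ whenever it lies in $\spn{C}$, and therefore equals~$0$.

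For~(iv), the same identity yields $\dim\pi(\partial_A(Z))=k$ and consequently $\pi(\partial_A(Z))=\pi(\spn{Z})\cap\pi(\spn{A-Z})$; but the latter already contains $\pi(\spn{S})\cap\pi(\spn{T})$ of the same dimension~$k$, so $\pi(\partial_A(Z))=\pi(\spn{S})\cap\pi(\spn{T})$, which is independent of $Z$. In particular $\pi(\partial_A(Z))=\pi(\partial_A(Z'))$, so each $x\in\partial_A(Z')$ is congruent modulo $\spn{C}$ to a uniquely determined $y\in\partial_A(Z)$ (uniqueness by~(iii)). To refine the witness to lie in $\spn{C\cap(Z\triangle Z')}$, I invoke submodularity of $\lambda_{M(A)}$: the values $\lambda_{M(A)}(Z\cup Z')$ and $\lambda_{M(A)}(Z\cap Z')$ are both at least $k$ by the minimum-cut hypothesis and sum to at most $2k$, hence both equal~$k$. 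Applying~(i) to $Z\cup Z'$ and~(ii) to $Z\cap Z'$ confines $x-y$ to $\spn{C\cap(Z\cup Z')}\cap\spn{C-(Z\cap Z')}$, and the auxiliary identity $\spn{C\cap(Z\cup Z')}\cap\spn{C-(Z\cup Z')}=0$ collapses this further to $\spn{C\cap(Z\triangle Z')}$.

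The main obstacle I anticipate is the initial setup: establishing the dimension identity cleanly requires carefully tracking how $\spn{C}$ sits inside each image of $\pi$ and repeatedly using the coindependence of $D$ to replace $r_{M(A)}(S\cup T\cup C)$ by $r(M(A))$. Once that identity and the sandwich are in place, (i)--(iv) fall out mechanically from a small amount of symmetric-difference bookkeeping.
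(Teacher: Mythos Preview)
Your proof is correct and takes a genuinely different route from the paper's.

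The paper invokes Lemma~\ref{lem:connminor} (the Geelen--Gerards--Whittle characterisation of when $\lambda_{M\setminus D/C}(X)=\lambda_M(X)$) to extract directly the identities $\spn{Z}\cap\spn{C-Z}=\{0\}$ and $\spn{A-Z}=\spn{T\cup(C-Z)}$ together with their symmetric counterparts; this is where coindependence of $D$ is actually used. Parts~(i)--(iii) then follow by a short splitting argument. For~(iv) the paper is explicitly constructive: from the spanning identities applied to $Z'$ it writes $x=s+c_1=t+c_2$ with $s\in\spn{S}$, $t\in\spn{T}$, decomposes $c_1,c_2$ along the four pieces of $C$ determined by $Z,Z'$, and exhibits $y$ and the witness $x-y\in\spn{C\cap(Z\triangle Z')}$ by hand, without invoking submodularity.

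Your argument replaces Lemma~\ref{lem:connminor} by a self-contained dimension count in the quotient by $\spn{C}$: the sandwich inequality simultaneously gives $\spn{Z}\cap\spn{C}=\spn{C\cap Z}$, $\spn{A-Z}\cap\spn{C}=\spn{C-Z}$, and $\spn{C\cap Z}\cap\spn{C-Z}=0$. In fact your computation never uses the coindependence of $D$; the formula for $\lambda_N(S)$ you derive from it is stated but not invoked afterwards. For~(iv) you proceed non-constructively---the equality $\pi(\partial_A(Z))=\pi(\spn{S})\cap\pi(\spn{T})$ yields existence and uniqueness of $y$ at once---and then, via submodularity, recycle (i) and (ii) on $Z\cup Z'$ and $Z\cap Z'$ to pin down $x-y$ inside $\spn{C\cap(Z\triangle Z')}$. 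This modular reuse is elegant; the paper's explicit construction trades that elegance for a visible witness and avoids the detour through submodularity.
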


\begin{proof}
    Let $M=M(A)$.
    Since $D$ is coindependent, 
    $r_M(A-D)=r_M(A)$.
    Let $C_1=C\cap Z$, $D_1=D\cap Z$, $C_2=C-Z$, and $D_2=D-Z$.
    By Lemma~\ref{lem:connminor}, 
    \begin{align*}
        r_M(A-Z)+r_M(A-D_2)&=r_M(A)+r_M(A-(Z\cup D_2)),\\
        r_M(Z\cup C_2)&= r_M(Z)+r_M(C_2).
    \end{align*}
    As $r_M(A-D_2)=r_M(A)$, from the first equation, we have $r_M(A-Z)=r_M(A-(Z\cup D_2))=r_M(T\cup C_2)$ and so 
    \begin{equation}\label{eq:a-z1}
        \spn{A-Z}=\spn{T\cup C_2}.
    \end{equation}
    From the second equation, we have 
    \begin{equation}\label{eq:a-z2}
        \spn{Z}\cap \spn{C_2}=\{0\}.  
    \end{equation} 

    By symmetry between $S$ and $T$ and between $Z$ and $V-Z$, we have 
    \begin{equation}\label{eq:a-z3}
        \spn{Z}=\spn{S\cup C_1} \text{ and }\spn{A-Z}\cap \spn{C_1}=\{0\}.  
    \end{equation}

    Suppose that $x,y\in \spn{Z}$ and $x-y\in \spn{C}$. 
    Let $c_1\in \spn{C_1}$ and $c_2\in \spn{C_2}$ such that $x-y=c_1+c_2$. 
    Then $x-y-c_1\in \spn{C_2}\cap \spn{Z}$. By~\eqref{eq:a-z2}, $x-y-c_1=0$ and so $x-y\in \spn{C_1}$. This proves (i).
    By symmetry, (ii) is also proved. 

    By (i) and (ii), if $x,y\in \partial_A(Z)$ and $x-y\in \spn{C}$, then 
    $x-y\in \spn{C\cap Z}\cap \spn{C-Z}$. By~\eqref{eq:a-z2}, $\spn{C\cap Z}\cap \spn{C-Z}=\{0\}$ and therefore $x=y$. This proves (iii).

    To prove (iv), suppose that $x\in \partial_A(Z')$. By \eqref{eq:a-z1} applied to $Z'$, there exist $t\in \spn{T}$ and $c_2\in \spn{C- Z'}$ such that $x=t+c_2$.  
    Similarly, by \eqref{eq:a-z3}, there exist $s\in \spn{S}$ and $c_1\in\spn{C\cap Z'}$ such that $x=s+c_1$. 
    We can write $c_1= c_{11}+c_{12}$ for $c_{11}\in \spn{C\cap (Z\cap Z')}$ and $c_{12}\in \spn{C\cap (Z'-Z)}$
    and write $c_2=c_{21}+c_{22}$ for $c_{21}\in\spn{C\cap (Z-Z')}$
    and $c_{22}\in \spn{C-(Z\cup Z')}$. 
    Let us define $y=s+c_{11}-c_{21}=t+c_{22}-c_{12}$. 
    Then $y\in \partial_A(Z)$ because $s+c_{11}-c_{21}\in \spn{Z}$ and $t+c_{22}-c_{12}\in \spn{A-Z}$.
    Now observe that $x-y=c_{12}+c_{21}\in \spn{C\cap (Z\triangle  Z')}$. This proves that the desired $y$ exists. 
    By (iii), such $y$ is unique.
\end{proof}

\section{Full sets}\label{sec:full}
We review the concepts of $B$-trajectories and full sets introduced by Jeong, Kim, and Oum~\cite{JKO2016}.

\subsection{$B$-trajectories}\label{subsec:traj}
Let $B$ be a vector space. A \emph{statistic} is a triple $a=(L,R,\lambda)$ of subspaces $L$, $R$ of $B$ and a non-negative integer $\lambda$. 
For convenience, we write $L(a)=L$, $R(a)=R$, and $\lambda(a)=\lambda$. 
A \emph{$B$-trajectory} is a sequence $\Gamma=a_0,a_1,\ldots,a_n$ of statistics for a non-negative integer $n$ such that 
\begin{itemize}
    \item $R(a_0)=L(a_n)$, 
    \item $L(a_0)\subseteq L(a_1)\subseteq \cdots \subseteq L(a_n)\subseteq B$,
    \item $R(a_n)\subseteq R(a_{n-1})\subseteq \cdots \subseteq R(a_0)\subseteq B$.
\end{itemize}
The width of $\Gamma$ is $\max_{0\le i\le n} \lambda(a_i)$.
We write $\Gamma(i)$ to denote $a_i$. The \emph{length} of $\Gamma$, denoted by $\abs{\Gamma}$, is $n+1$.

Let $A=\{e_1,e_2,\ldots,e_n\}$ be a configuration over a field $\F$.
From a path\hyp{}decomposition $\sigma=e_1,e_2,\ldots,e_n$ of a represented matroid $M=M(A)$,
we can obtain its \emph{canonical $B$-trajectory} as follows.
For $i=0,1,2,\ldots,n$, let 
\begin{align*}
    L_i&=\spn{e_1,e_2,\ldots ,e_{i}}\cap B,\\ 
    R_i&=\spn{e_{i+1},e_{i+2},\ldots,e_n}\cap B, \text{ and}\\
    \lambda_i&=\dim \spn{e_1,e_2,\ldots,e_{i}}\cap \spn{e_{i+1},e_{i+2},\ldots,e_n}
    -\dim L_i\cap R_i.    
\end{align*}
Note that $L_0=R_{n}=\{0\}$ and $\lambda_0=\lambda_{n}=0$.
Let $a_i=(L_i,R_i,\lambda_i)$ for $i=0,1,2,\ldots,n$.
Then it is easy to see that $\Gamma=a_0,a_1,a_2,\ldots,a_{n}$ is a $B$-trajectory, which we call the \emph{canonical $B$-trajectory} of $\sigma$.
If $\Gamma$ is a canonical $B$-trajectory of some path\hyp{}decomposition $\sigma$ of $M=M(A)$,
then we say $\Gamma$ is \emph{realizable} in $A$.

For a $B$-trajectory $\Gamma=a_0,a_1,a_2,\ldots,a_n$,
the \emph{compactification} of $\Gamma$, denoted by $\tau(\Gamma)$,
is a $B$-trajectory obtained from $\Gamma$
by applying the following operations repeatedly until no further operations can be applied.
\begin{itemize}
    \item Remove an entry $a_i$ if $a_{i-1}=a_i$.
    \item Remove a subsequence $a_{i+1},a_{i+2}$, $\ldots$, $a_{j-1}$ if $i+1<j$, $L(a_i)=L(a_j)$, $R(a_i)=R(a_j)$, and 
    either $\lambda(a_i)\le \lambda(a_k)\le \lambda (a_j)$ for all $k\in \{i+1,i+2,\ldots,j-1\}$
    or $\lambda(a_i)\ge \lambda(a_k)\ge \lambda(a_j)$ for all $k\in \{i+1,i+2,\ldots,j-1\}$.
\end{itemize}
We say that a $B$-trajectory is \emph{compact} if $\tau(\Gamma)=\Gamma$.
Let $U_k(B)$ be the set of all compact $B$-trajectories of width at most~$k$.

\begin{lemma}[{Jeong, Kim, and Oum~\cite[Lemma 11]{JKO2016}}]\label{lem:numcompact}
    Let $B$ be a vector space over a finite field $\mathbb F$ with dimension $\theta$. Then 
    \[\abs{U_k(B)}\le 2^{9\theta+2}\abs{\mathbb F}^{\theta(\theta-1)} 2^{2(2\theta+1)k}.
     \]
\end{lemma}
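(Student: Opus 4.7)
The plan is to decompose each compact $B$-trajectory into a subspace-chain skeleton and a sequence of within-run $\lambda$-patterns, count each component separately, and multiply.

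First I would analyze the coarse structure of a compact $B$-trajectory $\Gamma = a_0, a_1, \ldots, a_n$. Because $L(a_0) \subseteq L(a_1) \subseteq \cdots \subseteq L(a_n)$ is a weak chain of subspaces of the $\theta$-dimensional space $B$, it can undergo at most $\theta$ strict increases, and symmetrically the reverse chain $R(a_0) \supseteq \cdots \supseteq R(a_n)$ can undergo at most $\theta$ strict decreases. Partitioning the indices into maximal runs on which the pair $(L, R)$ is constant therefore yields at most $s \le 2\theta + 1$ runs.

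Next I would bound the $\lambda$-patterns inside a single run. Since $(L, R)$ is constant on a run, the two compactification rules constrain only the $\lambda$-coordinate: the first rule forces consecutive $\lambda$-values within a run to be distinct, and the second rule, applied to any two indices of the run, forbids any monotone sub-interval of length at least three. Hence each run's $\lambda$-sequence is a typical sequence in the Bodlaender--Kloks sense over the alphabet $\{0, 1, \ldots, k\}$. I would then invoke (or reprove by a short induction on $k$) the classical bound that the number of typical sequences over this alphabet is at most $c \cdot 2^{2k}$ for a small absolute constant $c$, giving at most $c \cdot 2^{2k}$ choices of $\lambda$-pattern per run.

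I would then enumerate the run skeletons, that is, the ordered sequence of $s$ distinct $(L, R)$ pairs. The $L$-coordinates form a strictly increasing chain of length at most $\theta + 1$ in the subspace lattice of $B$, and likewise the $R$-coordinates form a strictly decreasing chain (subject also to the boundary constraint $R(a_0) = L(a_n)$). Any chain of length at most $\theta + 1$ in the subspace lattice of $B$ is obtained by selecting a subset of a complete flag of $B$, so the number of such chains is bounded by $2^{\theta+1}$ times the number of complete flags, which is of order $\abs{\F}^{\theta(\theta-1)/2}$. Doing this once for $L$ and once for $R$, and bounding the at most $2\theta$ interleaving choices at run boundaries (which coordinate strictly changes there), yields a skeleton count of at most $\abs{\F}^{\theta(\theta-1)} \cdot 2^{O(\theta)}$.

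Multiplying the skeleton count by the per-run typical-sequence count raised to the power $2\theta + 1$ gives $\abs{\F}^{\theta(\theta-1)} \cdot 2^{O(\theta)} \cdot 2^{2(2\theta+1)k}$, which collapses into the claimed $2^{9\theta+2} \abs{\F}^{\theta(\theta-1)} 2^{2(2\theta+1)k}$ after packing the constants. The main obstacle will be this final bookkeeping: both the typical-sequence bound and the subspace-chain count must be sharp enough that the $O(\theta)$ overhead fits inside the $2^{9\theta+2}$ prefactor. Both ingredients are standard — induction on $k$ for the typical-sequence count and $q$-analog enumeration for chains in the subspace lattice — and combining them yields the stated inequality.
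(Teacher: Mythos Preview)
The paper does not prove this lemma at all; it is imported verbatim from \cite[Lemma~11]{JKO2016} and used as a black box. Your sketch is essentially a reconstruction of the argument that appears there: split a compact $B$-trajectory into at most $2\theta+1$ maximal blocks of constant $(L,R)$, observe that on each block the $\lambda$-values form a Bodlaender--Kloks typical sequence over $\{0,\dots,k\}$ (so at most $\tfrac{8}{3}4^{k}$ choices per block), and count the possible $(L,R)$-skeletons by flag enumeration in $B$. Since there is no proof in the present paper to compare against, the relevant question is only whether your outline is sound, and it is; the decomposition and the two counting ingredients are exactly the right ones, and the only work you defer is the arithmetic needed to pack the $2^{O(\theta)}$ contributions from the flag count, the $(\tfrac{8}{3})^{2\theta+1}$ from the typical-sequence constants, and the interleaving of the $L$- and $R$-jumps into the stated $2^{9\theta+2}$ prefactor.
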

We can define 
binary relations which compare two $B$-trajectories as follows~\cite{JKO2016}. 
For two statistics $a$ and $b$, we write $a\leq b$ if
\[L(a)=L(b),~R(a)=R(b),\text{ and } \lambda(a)\leq \lambda(b).\]
For two $B$-trajectories $\Gamma_1$ and $\Gamma_2$, we write $\Gamma_1\le\Gamma_2$ if the lengths of $\Gamma_1$ and $\Gamma_2$ are the same, say $n$, 
and $\Gamma_1(i)\le \Gamma_2(i)$ for all $0\le i\le n-1$.
A $B$-trajectory $\Gamma^*$ is called an \emph{extension} of a $B$-trajectory $\Gamma$ if $\Gamma^*$ can be obtained by repeating some statistics of $\Gamma$.
We say that $\Gamma_1 \tle \Gamma_2$ if
there are extensions $\Gamma_1^*$ of $\Gamma_1$ and $\Gamma_2^*$ of $\Gamma_2$ such that $\Gamma_1^*\leq \Gamma_2^*$.

\subsection{A full set}\label{subsec:full}

We review the \emph{full set} notion introduced by Jeong, Kim, and Oum~\cite{JKO2016}
used for their algorithm to decide the path-width of represented matroids. 
Let $A$ be a configuration of vectors in a vector space $V$ over a field $\F$. 
Let $B$ be a subspace of $V$.

The \emph{full set} of $A$ of width $k$ with respect to $B$,
denoted by $\FS(A, B)$, is the set of all compact $B$-trajectories $\Gamma$ 
of width at most~$k$ 
such that there exists a $B$-trajectory $\Delta$ realizable in $A$ with  $\Delta \tle \Gamma$.
From the definition, it is clear that 
\[ \text{$\FS(A,\{0\})\neq\emptyset$ if and only if $M(A)$ has path-width at most~$k$.}\] 

By Lemma~\ref{lem:numcompact}, 
the number of $B$-trajectories in $\FS(A,B)$ is bounded by a function of $\abs{\F}$, $\dim B$, and $k$.

The following two lemmas are an immediate consequence of Jeong, Kim, and Oum~\cite[Propositions 35 and 36]{JKO2016}.

\begin{lemma}\label{lem:shrink}
    Let $A$, $A'$ be configurations in a vector space $V$. 
    Let $k$  be a non-negative integer.
    Let $B$ be a subspace of $V$.
    If $\FS(A,B)=\FS(A',B)$, then 
    $\FS(A,\{0\})=\FS(A',\{0\})$.
\end{lemma}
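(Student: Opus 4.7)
Since the lemma is stated as an immediate consequence of Propositions~35 and~36 of Jeong, Kim, and Oum~\cite{JKO2016}, the essence of the argument is to extract from those propositions the fact that $\FS(A,\{0\})$ is a deterministic function of $\FS(A,B)$, depending only on $B$, $k$, and the ambient field, and not otherwise on $A$. Concretely, I expect the two propositions to combine to produce a map $\pi$ on subsets of $U_k(B)$ such that $\pi(\FS(A,B))=\FS(A,\{0\})$ for every configuration $A$ in a vector space containing $B$. Granting such a $\pi$, the conclusion is one line:
\[
\FS(A,\{0\})=\pi(\FS(A,B))=\pi(\FS(A',B))=\FS(A',\{0\}).
\]

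The natural candidate for $\pi$ is the projection that sends each statistic $(L,R,\lambda)$ to $(\{0\},\{0\},\lambda+\dim(L\cap R))$ and then compactifies the resulting sequence. This choice is forced by the definition of canonical trajectory: if $\Gamma$ is the canonical $B$-trajectory of $\sigma=e_1,\ldots,e_n$, then for each $i$ the matroid connectivity $\lambda_{M(A)}(\{e_1,\ldots,e_i\})$ equals $\lambda(\Gamma(i))+\dim(L(\Gamma(i))\cap R(\Gamma(i)))$, which is exactly the width recorded by the canonical $\{0\}$-trajectory of the same~$\sigma$. Thus $\pi$ sends canonical $B$-trajectories to canonical $\{0\}$-trajectories, so realizability passes through $\pi$ without using any further data about $A$.

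The nontrivial work, which I attribute entirely to the cited propositions, is to check that $\pi$ is compatible with the preorder $\tle$ and with compactification, and moreover that the image of $\FS(A,B)$ under $\pi$ is all of $\FS(A,\{0\})$. The subtle point, and what I expect to be the main obstacle, is that compactification on the $B$-side and on the $\{0\}$-side obey different rules: two $B$-trajectories that look distinct at the level of $B$ may collapse to the same $\{0\}$-trajectory, and conversely one must verify that within a block of $B$-statistics sharing $L$ and $R$, the quantity $\dim(L\cap R)$ is constant, so the projected sequence of $\lambda+\dim(L\cap R)$ values is itself well-behaved under compactification. Once these bookkeeping issues are handled by Propositions~35 and~36, the lemma follows in one line as displayed above.
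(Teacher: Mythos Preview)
Your proposal is correct and matches the paper's approach exactly: the paper gives no argument beyond the sentence ``an immediate consequence of Jeong, Kim, and Oum~\cite[Propositions~35 and~36]{JKO2016},'' and you have supplied the intended mechanism behind that citation---namely, that those propositions furnish a deterministic ``shrink'' operator taking $\FS(A,B)$ to $\FS(A,\{0\})$ independently of $A$. Your identification of the statistic-level map $(L,R,\lambda)\mapsto(\{0\},\{0\},\lambda+\dim(L\cap R))$ as the underlying projection is correct for canonical trajectories, and your caveat that the compatibility with $\tle$, compactification, and surjectivity onto $\FS(A,\{0\})$ is precisely what the cited propositions supply is accurate and appropriately honest about where the real work lies.
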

\begin{lemma}\label{lem:fullset}
    Let $A_1$, $A_1'$, $A_2$, $A_2'$ be configurations in a vector space $V$. 
    Let $k$  be a non-negative integer.
    Let $B$ be a subspace of $V$  
    such that 
    $(\spn{A_1}+B)\cap (\spn{A_2}+B)=B$
    and 
    $(\spn{A_1'}+B)\cap (\spn{A_2'}+B)=B$.
    If $\FS(A_1,B)=\FS(A_1',B)$ and $\FS(A_2,B)=\FS(A_2',B)$, then 
    $\FS(A_1\cup A_2,B)=\FS(A_1'\cup A_2',B)$.
\end{lemma}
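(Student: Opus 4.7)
By symmetry it suffices to prove the containment $\FS(A_1\cup A_2,B)\subseteq \FS(A_1'\cup A_2',B)$. Fix $\Gamma\in\FS(A_1\cup A_2,B)$ and pick a path\hyp{}decomposition $\sigma$ of $M(A_1\cup A_2)$ whose canonical $B$-trajectory $\Delta$ satisfies $\Delta\tle\Gamma$. The first step is a separation lemma coming from the transversality hypothesis $(\spn{A_1}+B)\cap(\spn{A_2}+B)=B$: it forces $\spn{A_1}\cap\spn{A_2}\subseteq B$, and for any $U_j\subseteq\spn{A_j}$ we have $(U_1+U_2)\cap B=(U_1\cap B)+(U_2\cap B)$; equivalently, in the quotient $V/B$ the images $\pi(\spn{A_1})$ and $\pi(\spn{A_2})$ sit in a direct sum.

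Set $\sigma_j:=\sigma|_{A_j}$ with canonical $B$-trajectory $\Delta_j$, and let $S_j,T_j$ denote the prefix/suffix restrictions of $\sigma$ to $A_j$. The separation lemma yields an additive decomposition of the $L$-- and $R$--coordinates, namely $L_i=L_i^{(1)}+L_i^{(2)}$ and $R_i=R_i^{(1)}+R_i^{(2)}$, and a merge formula for $\lambda_i$ obtained by computing the intersection in $V/B$:
\[
\lambda_i
=\dim\pi(\spn{S_1})\cap\pi(\spn{T_1})+\dim\pi(\spn{S_2})\cap\pi(\spn{T_2}),
\]
where each summand is a function of $(L_i^{(j)},R_i^{(j)},\lambda_i^{(j)})$ and the constant $\dim(\spn{A_j}\cap B)$ (itself read off from the endpoints of $\Delta_j$) via the rank formula $\dim\pi(\spn{S_j})\cap\pi(\spn{T_j})=\lambda_{M(A_j)}(S_j)-\dim L_i^{(j)}-\dim R_i^{(j)}+\dim(\spn{A_j}\cap B)$. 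Since $\pi(\spn{S_j}\cap\spn{T_j})\subseteq \pi(\spn{S_j})\cap\pi(\spn{T_j})$, we obtain $\lambda_i\ge \lambda_i^{(1)}+\lambda_i^{(2)}$, so $\Delta_j$ has width at most~$k$ and $\Gamma_j:=\tau(\Delta_j)\in\FS(A_j,B)=\FS(A_j',B)$.

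By hypothesis we may pick a path\hyp{}decomposition $\sigma_j'$ of $M(A_j')$ whose canonical $B$-trajectory $\Delta_j'$ satisfies $\Delta_j'\tle\Gamma_j$. I would then interleave $\sigma_1'$ and $\sigma_2'$ into a linear layout $\sigma'$ of $A_1'\cup A_2'$ using a shuffle that mirrors the pattern of $\sigma$ at the slot level of $\Gamma_1$ and $\Gamma_2$: split the positions of $\sigma$ into maximal blocks within which the compact trajectories $\tau(\Delta_j)$ stay in a fixed slot of $\Gamma_j$, and within the corresponding block of $\sigma'$ advance $\sigma_j'$ through exactly those positions of $\Delta_j'$ that occupy the same slot of $\Gamma_j$. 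This is well defined since both $\Delta_j$ and $\Delta_j'$ refine the same $\Gamma_j$.

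The main obstacle is to verify that the canonical $B$-trajectory $\Delta'$ of $\sigma'$ satisfies $\Delta'\tle\Gamma$. The second transversality hypothesis $(\spn{A_1'}+B)\cap(\spn{A_2'}+B)=B$ allows us to apply the merge formula to $\Delta'$ as well; moreover $\dim(\spn{A_j'}\cap B)=\dim(\spn{A_j}\cap B)$, as both are the common endpoint of $\Gamma_j$. By construction, at each slot boundary the inputs $(L_i^{(j)'},R_i^{(j)'},\lambda_i^{(j)'})$ of $\Delta'$ agree with the corresponding inputs of $\Delta$, so the merge formula forces $\Delta'$ and $\Delta$ to share statistics on slot boundaries; in between, both are dominated by $\Gamma$ via the definition of $\tle$. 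Building common extensions slot\hyp{}by\hyp{}slot yields $\Delta'\tle\Gamma$, and hence $\Gamma\in\FS(A_1'\cup A_2',B)$.
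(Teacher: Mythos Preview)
The paper does not give its own proof of this lemma; it simply records it as ``an immediate consequence of Jeong, Kim, and Oum~\cite[Propositions~35 and~36]{JKO2016}.'' Your attempt to unpack a direct argument is in the right spirit---restrict the layout~$\sigma$ to $A_1$ and $A_2$, invoke the hypothesis on each side, and shuffle the replacements back together---but the central ``merge formula''
\[
\lambda_i=\dim\bigl(\pi(\spn{S_1})\cap\pi(\spn{T_1})\bigr)+\dim\bigl(\pi(\spn{S_2})\cap\pi(\spn{T_2})\bigr)
\]
is false. Take $V=\F^3$, $B=\spn{e_3}$, $A_1=\{e_1,\,e_1+e_3\}$, $A_2=\{e_2,\,e_2+e_3\}$; the transversality hypothesis holds on both sides. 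For the layout $\sigma=(e_1,e_2,e_1+e_3,e_2+e_3)$ at $i=2$ one has $S_1=\{e_1\}$, $T_1=\{e_1+e_3\}$, $S_2=\{e_2\}$, $T_2=\{e_2+e_3\}$; since $\pi(e_1+e_3)=\pi(e_1)$ and $\pi(e_2+e_3)=\pi(e_2)$, each summand on the right equals~$1$, giving~$2$. But $\spn{S}\cap\spn{T}=\spn{e_1-e_2}$ and $L_2=R_2=\{0\}$, so $\lambda_2=1$. The underlying reason is that $\lambda_i=\dim\pi(\spn{S}\cap\spn{T})$, which is only \emph{contained in} $\pi(\spn{S})\cap\pi(\spn{T})$; the gap measures how an element of $B$ lying in $\spn{A_1}\cap\spn{A_2}$ can be split as $s_1-t_1=t_2-s_2$ across the two sides without any $s_j$ lying in $\spn{S_j}\cap\spn{T_j}$.

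This error is fatal to your final paragraph. Once the merge formula no longer recovers $\lambda_i$ from the restricted statistics $(L_i^{(j)},R_i^{(j)},\lambda_i^{(j)})$ together with the endpoint constants of~$\Gamma_j$, you cannot conclude that $\Delta$ and $\Delta'$ share the same statistic at slot boundaries, and the slot\hyp{}by\hyp{}slot construction of common extensions witnessing $\Delta'\tle\Gamma$ has no foundation. (Your earlier use of the formula, to get $\lambda_i\ge\lambda_i^{(1)}+\lambda_i^{(2)}$, reaches a true conclusion, but for a different reason: $\pi(\spn{S_1}\cap\spn{T_1})$ and $\pi(\spn{S_2}\cap\spn{T_2})$ are independent subspaces of $\pi(\spn{S}\cap\spn{T})$ directly.) The machinery in~\cite{JKO2016} that the paper invokes sidesteps this by defining an abstract join on $B$-trajectories and proving that it controls the canonical trajectory of any interleaving; that join, rather than a closed pointwise formula for~$\lambda_i$, is what carries the argument.
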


For a configuration $A=\{e_1,e_2,\ldots,e_n\}$ and 
a linear transformation $\phi$,
we write $\phi(A)$ to denote a configuration 
$\{\phi(e_1),\phi(e_2),\ldots,\phi(e_n)\}$.

If $B_1$ and $B_2$ are subspaces of the same dimension
and $\phi$ is a bijective linear transformation from $B_1$ to $B_2$, 
then for each $B_1$-trajectory $\Gamma$ we can define a $B_2$-trajectory $\Delta :=\phi(\Gamma)$ 
in the following way:
\[
L(\Delta(i))=\phi (L(\Gamma(i))), \quad R(\Delta(i))=\phi (R(\Gamma(i))), \quad \lambda(\Delta(i))=\lambda(\Gamma(i)),
\]
for every $0\leq i\leq \abs{\Gamma}-1$. For a set of $B$-trajectories $\mathcal R$, we define the set $\phi(\mathcal R)= \{\phi(\Gamma): \Gamma \in \mathcal R\}$.

Observe that if $\phi$ is a linear transformation on $\spn{A}$ that is injective on $\spn{A_1}$ and $B_1$ is a subspace of $\spn{A_1}$, 
then \[ \phi(\FS(A_1,B_1))=\FS(\phi(A_1),\phi(B_1)).\] 
Here on the right-hand side, we use $\phi$ values for all vectors in $\spn{A_1}$ but on the left-hand side, we only use $\phi$ for vectors in $B_1$.

We can deduce the following lemma easily from Lemmas~\ref{lem:shrink} and \ref{lem:fullset}.

\begin{lemma}\label{lem:fullsetkey}
    Let $k$ be a non-negative integer and let $\mathbb F$ be a field. 
    Let $A$ be a configuration in a vector space $V$ over $\mathbb F$
    and let $A'$ be a configuration in a vector space $V'$ over $\mathbb F$.
    Let $(A_1,A_2)$ be a partition of $A$
    and $(A_1',A_2')$ be a partition of $A'$. 
    If there is a bijective linear transformation $\phi:\partial_{A}(A_1)\to \partial_{A'}(A_1')$ such that 
    \begin{align*}
        \phi(\FS(A_1,\partial_A(A_1)))&=\FS(A_1',\partial_{A'}(A_1')) \text{ and }\\
        \phi(\FS(A_2,\partial_A(A_1)))&=\FS(A_2',\partial_{A'}(A_1')),
    \end{align*}
    then 
    the path-width of $M(A)$ is at most~$k$ if
    and only if the path-width of $M(A')$ is at most~$k$.
\end{lemma}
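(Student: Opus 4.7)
The plan is to embed the ambient spaces $V$ and $V'$ into a single vector space $\tilde V$ in which $\phi$ becomes an identification, and then combine two applications of Lemma~\ref{lem:fullset} with Lemma~\ref{lem:shrink} to deduce $\FS(A,\{0\})=\FS(A',\{0\})$ inside $\tilde V$. The equivalence of the two path-widths follows at once, since for any configuration $C$ one has $\FS(C,\{0\})\neq\emptyset$ if and only if $M(C)$ has path-width at most~$k$.

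First I would form the pushout $\tilde V=(V\oplus V')/\{(b,-\phi(b)):b\in \partial_A(A_1)\}$. The canonical maps $V\hookrightarrow \tilde V$ and $V'\hookrightarrow \tilde V$ are injective, and the images of $\partial_A(A_1)$ and $\partial_{A'}(A_1')$ coincide; call this common image $\tilde B$. Under these embeddings $\phi$ becomes the identity on $\tilde B$, so the hypothesis of the lemma rewrites as
\[ \FS(A_1,\tilde B)=\FS(A_1',\tilde B) \qquad\text{and}\qquad \FS(A_2,\tilde B)=\FS(A_2',\tilde B), \]
with every full set now computed in the single space $\tilde V$.

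The next step is to verify the span hypotheses of Lemma~\ref{lem:fullset} inside $\tilde V$. The ``within-side'' identities $(\spn{A_1}+\tilde B)\cap(\spn{A_2}+\tilde B)=\tilde B$ and $(\spn{A_1'}+\tilde B)\cap(\spn{A_2'}+\tilde B)=\tilde B$ follow directly from $\tilde B=\partial_A(A_1)\subseteq\spn{A_1}\cap\spn{A_2}$ and its primed analogue, since then $\spn{A_i}+\tilde B=\spn{A_i}$ on each side. The ``cross'' identity $(\spn{A_1'}+\tilde B)\cap(\spn{A_2}+\tilde B)=\tilde B$ is precisely what the pushout construction buys us: $\spn{A_1'}\subseteq V'$ and $\spn{A_2}\subseteq V$, while $V\cap V'=\tilde B$ inside $\tilde V$, giving $\spn{A_1'}\cap\spn{A_2}\subseteq \tilde B$, and the reverse inclusion is clear. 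Applying Lemma~\ref{lem:fullset} once to swap $A_1$ for $A_1'$ and a second time to swap $A_2$ for $A_2'$ then yields $\FS(A_1\cup A_2,\tilde B)=\FS(A_1'\cup A_2',\tilde B)$. Lemma~\ref{lem:shrink} upgrades this to $\FS(A,\{0\})=\FS(A',\{0\})$ in $\tilde V$, and since $V,V'\hookrightarrow \tilde V$ are injective, the matroids $M(A)$ and $M(A')$, together with their path-widths, are unchanged by the passage to~$\tilde V$.

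The main obstacle I expect is the bookkeeping in the second step: honestly translating the input assumption $\phi(\FS(A_i,\partial_A(A_1)))=\FS(A_i',\partial_{A'}(A_1'))$ into the set equality $\FS(A_i,\tilde B)=\FS(A_i',\tilde B)$ inside $\tilde V$. This reduces to checking that realizability of a $B$-trajectory, the compactification $\tau$, and the quasi-order $\tle$ of Section~\ref{sec:full} all commute both with an injective linear embedding of the ambient vector space and with the identification of $\partial_A(A_1)$ with $\partial_{A'}(A_1')$ along $\phi$; once this compatibility is spelled out, the remainder is the short chain of invocations of Lemmas~\ref{lem:fullset} and~\ref{lem:shrink} described above.
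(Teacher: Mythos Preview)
Your proposal is correct and follows essentially the same approach as the paper: embed $V$ and $V'$ into a common ambient space so that $\phi$ becomes the identity on a common boundary $\tilde B$, then invoke Lemma~\ref{lem:fullset} and Lemma~\ref{lem:shrink}. The only noteworthy difference is that the paper applies Lemma~\ref{lem:fullset} \emph{once}, swapping the pair $(A_1,A_2)$ for $(A_1',A_2')$ simultaneously, whereas you apply it twice via the intermediate configuration $A_1'\cup A_2$; your route therefore needs the extra ``cross'' identity $(\spn{A_1'}+\tilde B)\cap(\spn{A_2}+\tilde B)=\tilde B$, which the paper's single application avoids (it only needs the two within-side identities already recorded in the hypotheses of Lemma~\ref{lem:fullset}).
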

\begin{proof}
    We may assume that $\phi$ is the identity function on $\partial_A(A_1)$
    by applying some injective linear transformation on $V'$. 
    By extending $\phi$ and replacing both $V$ and $V'$ with $V+V'$, we may assume that $V=V'$ and $\phi$ is 
    the identity.
    Let $B=\partial_A(A_1)=\partial_{A'}(A_1')$. 

    Suppose that the path-width of $M(A)$ is at most~$k$. 
    Then $\FS(A,\{0\})$ is non-empty.
    Since $\FS(A_1,B)= \FS(A_1',B)$ and $\FS(A_2,B)=\FS(A_2',B)$, 
    by Lemma~\ref{lem:fullset}, 
    $\FS(A_1\cup A_2,B)=\FS(A_1'\cup A_2',B)$.
    By Lemma~\ref{lem:shrink}, 
    $\FS(A_1\cup A_2,\{0\})=\FS(A_1'\cup A_2',\{0\})$
    and therefore $\FS(A,\{0\})=\FS(A',\{0\})\neq\emptyset$.
    This implies that the path-width of $M(A')$ is at most~$k$. The converse holds by symmetry.
\end{proof}

\section{Finding many repeated cuts}\label{sec:linked}

The following lemma can be used to find many cuts in the linked path\hyp{}decomposition that are of the same width and linked.
\begin{lemma}\label{lem:cutcount}
    Let $\ell\ge 4$ be an integer.
    Let $a_0,a_1,a_2,\ldots,a_n$ be a sequence of integers such that 
    $a_i\ge a_0=a_n$ for all $0\le i\le n$ and $\abs{a_i-a_{i+1}}\le 1$.
    If \[n    \ge 
    \left(\ell-1+\frac{2(\ell-2)}{\ell-3}\right) (\ell-2)^{\max_{0\le i\le n}(a_i-a_0)} - \frac{2(\ell-2)}{\ell-3}
    ,
    \]
    then   there exist $0\le i_1<i_2<i_3<\cdots < i_\ell\le n $ and $w$ such that 
    \[a_{i_1}=a_{i_2}=\cdots=a_{i_\ell}=w
    \text{ and }
    a_i\ge w  \text{ for all }i_1\le i\le  i_\ell.\] 
\end{lemma}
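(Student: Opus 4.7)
The plan is to argue by induction on $M := \max_{0 \le i \le n}(a_i - a_0)$, with $\ell$ fixed. The base case $M = 0$ forces every $a_i$ to equal $a_0$, so any $\ell$ of them serve as the desired indices, and the hypothesis $n \ge \ell - 1$ (which follows from the displayed bound) suffices.

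For the inductive step, let $P = \{i : a_i = a_0\}$, which contains $0$ and $n$. If $\abs{P} \ge \ell$, we are done with $w = a_0$, since $a_i \ge a_0$ everywhere by hypothesis. Otherwise write $P = \{p_0 < p_1 < \cdots < p_t\}$ with $t \le \ell - 2$. The $n - t$ remaining indices lie inside the $t$ gaps between consecutive $p_j$'s, so by pigeonhole some gap $(p_j, p_{j+1})$ contains at least $\lceil (n-t)/t \rceil \ge (n - (\ell - 2))/(\ell - 2)$ indices. The sub-sequence
\[ b_0, b_1, \ldots, b_{n'} \;:=\; a_{p_j + 1}, a_{p_j + 2}, \ldots, a_{p_{j+1} - 1} \]
satisfies $b_i \ge a_0 + 1$ for all $i$; because $\abs{a_i - a_{i+1}} \le 1$ and $a_{p_j} = a_{p_{j+1}} = a_0$, we actually have $b_0 = b_{n'} = a_0 + 1$. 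In particular this sub-sequence satisfies the hypotheses of the lemma with maximum deviation at most $M - 1$, and its length is at least $(n - (\ell - 2))/(\ell - 2)$, so $n' \ge (n - 2(\ell - 2))/(\ell - 2)$.

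Let $g(M) := \bigl(\ell - 1 + \tfrac{2(\ell - 2)}{\ell - 3}\bigr)(\ell - 2)^M - \tfrac{2(\ell - 2)}{\ell - 3}$ denote the right-hand side in the displayed bound. A direct check shows that $g(0) = \ell - 1$ and $g$ satisfies the linear recurrence $g(M) = (\ell - 2)\, g(M - 1) + 2(\ell - 2)$. Assuming $n \ge g(M)$, we obtain
\[ n' \;\ge\; \frac{n - 2(\ell - 2)}{\ell - 2} \;\ge\; \frac{g(M) - 2(\ell - 2)}{\ell - 2} \;=\; g(M - 1), \]
so the induction hypothesis applied to $b_0, \ldots, b_{n'}$ yields $\ell$ indices with the desired property inside the sub-sequence, which then form $\ell$ valid indices for the original sequence at the same value $w \ge a_0 + 1$.

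The main content of the argument is the pigeonhole step together with the observation that endpoints of each nontrivial excursion are forced to equal $a_0 + 1$ by the unit-step condition; once these are in hand, the rest is a routine solution of the linear recurrence. The only subtlety is to verify that the worst case in the pigeonhole is indeed $t = \ell - 2$ (since $(n - t)/t$ is decreasing in $t$ over the relevant range), which makes the constants in the recurrence tight and produces exactly the bound claimed in the lemma.
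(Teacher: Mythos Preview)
Your proof is correct and follows essentially the same approach as the paper's: induction on $M$, pigeonhole on the gaps between consecutive indices where $a_i=a_0$, and the unit-step condition to force the endpoints of the chosen gap to equal $a_0+1$. Your presentation is in fact slightly more transparent, since you work directly with the gap between consecutive minima (so the endpoint values are immediate) and you make the recurrence $g(M)=(\ell-2)g(M-1)+2(\ell-2)$ explicit, whereas the paper does the equivalent algebra inline.
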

\begin{proof}
    We proceed by induction on $M=\max_{0\le i\le n}(a_i-a_0)$.
    It  is trivial if $M=0$.
    Let $m=\abs{\{ i\in\{0,1,\ldots,n\}: a_i=a_0\}}$. 
    If $m\ge \ell$, then we are done.
    Thus we may assume that $m\le \ell-1$.
    Then there exists a subsequence $a_p,a_{p+1},\ldots,a_{q}$ such that 
    $a_{i}>a_0$ for all $p\le i\le q$, 
    and \[q-p+1\ge \frac{n}{m-1}-1\ge \frac{n}{\ell-2}-1.\]
    Equivalently, 
    \[ q-p+\frac{2(\ell-2)}{\ell-3}
    \ge \frac{1}{\ell-2} \left( n+ \frac{2(\ell-2)}{\ell-3}\right)
    \]
    and therefore  
    \[ q-p\ge \left(\ell-1+\frac{2(\ell-2)}{\ell-3}\right) (\ell-2)^{M-1} - \frac{2(\ell-2)}{\ell-3}.
    \]
    
    We may assume that $q-p$ is chosen as a maximum. Then 
    by the assumption that $\abs{a_i-a_{i+1}}\le 1$, we deduce that $a_p=a_q=a_0+1$. 
    Now we apply the induction hypothesis to the subsequence 
    $a_p, a_{p+1},\ldots,a_{q}$ to conclude the proof.
\end{proof}

We will apply Lemma~\ref{lem:cutcount} to a sequence $a_0,a_1,a_2,\ldots,a_{n}$
obtained from a linked path\hyp{}decomposition $\sigma=e_1,e_2,\ldots, e_n,$ 
where $a_i=\lambda_M(\{e_1,e_2,\ldots,e_{i}\})$ for $i=0,1,2,\ldots,n$. It is easy to verify that any path\hyp{}decomposition $\sigma$ of 
a represented matroid meets the requirement that $\abs{a_i-a_{i+1}}\le 1$ of Lemma~\ref{lem:cutcount}. 
The next lemma is needed.

\begin{lemma}
    Let $M$ be a matroid.
    If $e\in X\subseteq E(M)$, 
    then $\abs{\lambda_M(X)-\lambda_M(X-\{e\})}\le 1$. 
\end{lemma}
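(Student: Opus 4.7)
The plan is to unfold the definition of $\lambda_M$ and reduce the claim to the elementary fact that the rank function of a matroid changes by at most one when a single element is added or removed.

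Concretely, I would set $Y = E(M) - X$, so that $E(M) - (X - \{e\}) = Y \cup \{e\}$, and compute
\[
\lambda_M(X) - \lambda_M(X - \{e\}) = \bigl(r_M(X) - r_M(X - \{e\})\bigr) - \bigl(r_M(Y \cup \{e\}) - r_M(Y)\bigr).
\]
The term $r(M)$ cancels, so only these two rank-differences remain. By the standard unit-increment property of matroid rank functions (which follows directly from axioms (I1)--(I3): any independent subset of a set is extended by at most one element when the set grows by one), each of the two parenthesized quantities lies in $\{0,1\}$.

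Therefore the whole expression lies in $\{-1, 0, 1\}$, which gives $|\lambda_M(X) - \lambda_M(X - \{e\})| \le 1$, as required. There is no real obstacle here; the only thing to be careful about is invoking the unit-increment property of $r_M$, which is a basic consequence of the augmentation axiom and is standard in any matroid textbook (e.g.\ Oxley).
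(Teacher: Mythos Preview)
Your argument is correct: expanding $\lambda_M$ in terms of $r_M$, the $r(M)$ terms cancel and you are left with a difference of two quantities each in $\{0,1\}$ by the unit-increase property of the rank function, so the result is in $\{-1,0,1\}$.

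The paper proves the same lemma differently, working entirely at the level of the connectivity function $\lambda_M$ rather than unfolding it. It applies submodularity to the pair $X-\{e\}$, $\{e\}$ to get $\lambda_M(X)\le \lambda_M(X-\{e\})+\lambda_M(\{e\})\le \lambda_M(X-\{e\})+1$ (using $\lambda_M(\{e\})\le 1$), and then obtains the reverse inequality by symmetry of $\lambda_M$. Your approach is a bit more elementary and specific to matroids, while the paper's argument has the advantage that it applies verbatim to \emph{any} connectivity function~$f$ with $f(\{e\})\le 1$ for all~$e$; this is exactly the generality needed later when the same lemma is used for the cut-rank function $\rho_G$ in Section~\ref{sec:linearrankwidth}.
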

\begin{proof}
    By the submodularity of the connectivity function, we have 
    $\lambda_M(X-\{e\})+ \lambda_M(\{e\})\ge \lambda_M(X)$.
    Since $\lambda_M(\{e\})\le 1$, we have 
    $\lambda_M(X)\le \lambda_M(X-\{e\})+1$. 
    Since $\lambda_M$ is symmetric, 
    we deduce that 
    $\lambda_M(X-\{e\})\le \lambda_M(X)+1$.
\end{proof}

\section{The proof}	\label{sec:proof}

The following proposition proves Theorem~\ref{thm:main}.
\begin{proposition}\label{prop:main}
    Let $\F$ be a finite field and $k$ be a non-negative integer. 
    Let $M$ be an $\F$\hyp{}representable matroid of path-width larger than $k$.
    Let 
    $\ell = 2^{2^{9k+11}\abs{\mathbb F}^{k(k+1)} 2^{2(2k+3)k}}+1$.  
    If 
    \[
        \abs{E(M)}\ge 
        \left(\ell-1+\frac{2(\ell-2)}{\ell-3}\right) (\ell-2)^{k+1} - \frac{2(\ell-2)}{\ell-3},
    \]
    then there is $e\in E(M)$ such that $M/e$ or $M\setminus e$ has path-width larger than $k$. 
\end{proposition}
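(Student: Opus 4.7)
The plan is to reduce to the case that $M$ has path-width exactly $k+1$, and then to use Nagamochi's theorem together with Lemma~\ref{lem:cutcount} to extract many nested cuts of equal connectivity from a linked path-decomposition; among them a pigeonhole on ``full-set signatures'' produces a proper minor $N$ of $M$ still of path-width larger than~$k$, from which the required element~$e$ is immediate. If $M$ has path-width at least $k+2$, then for any $e\in E(M)$ either $M/e$ or $M\setminus e$ has path-width larger than~$k$; otherwise, extending a width-$k$ path-decomposition of $M\setminus e$ by~$e$ yields a path-decomposition of $M$ of width at most~$k+1$, a contradiction. So I assume the path-width of $M$ equals $k+1$, and by Theorem~\ref{thm:nagamochi} take a linked path-decomposition $\sigma=e_1,\ldots,e_n$ of width $k+1$. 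The sequence $a_i:=\lambda_M(\{e_1,\ldots,e_i\})$ satisfies $a_0=a_n=0$, $\abs{a_i-a_{i+1}}\le 1$ and $\max_i a_i\le k+1$, so the size assumption combined with Lemma~\ref{lem:cutcount} produces indices $i_1<\cdots<i_\ell$ and an integer $w\le k+1$ with $a_{i_1}=\cdots=a_{i_\ell}=w$ and $a_i\ge w$ for all $i_1\le i\le i_\ell$. Write $X_j:=\{e_1,\ldots,e_{i_j}\}$ and $B_j:=\partial_A(X_j)$, where $A$ is the configuration representing $M$; then $\dim B_j=w$, and by linkedness $\min\{\lambda_M(Z):X_j\subseteq Z\subseteq X_{j'}\}=w$ for all $j<j'$.

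Using Tutte's linking theorem (Theorem~\ref{thm:linking}), I fix one minor $N_\ell=M/C_\ell\setminus D_\ell$ on $X_1\cup (E(M)-X_\ell)$ with $\lambda_{N_\ell}(X_1)=w$, normalizing so that $C_\ell$ is independent and $D_\ell$ is coindependent in $M$. For every $j<\ell$ I set $C_j:=C_\ell\cap(X_j-X_1)$ and $D_j:=D_\ell\cap(X_j-X_1)$, and for $j<j'$ set $C_{j,j'}:=C_{j'}-C_j$, $D_{j,j'}:=D_{j'}-D_j$. A routine verification based on Lemma~\ref{lem:connminor}---the rank equality $r_M(X_1\cup C_\ell)=r_M(X_1)+r_M(C_\ell)$ restricts to any subset of the independent set $C_\ell$, and the dual equality behaves analogously for the coindependent $D_\ell$---shows that the relevant sub-minors still realize connectivity~$w$, so Proposition~\ref{prop:stronglinking}(iv) yields canonical linear bijections $\phi_j\colon B_j\to B_1$ and $\phi_{j,j'}\colon B_{j'}\to B_j$ sending $x$ to the unique element of the target that differs from $x$ by a vector in $\spn{C_j}$, respectively $\spn{C_{j,j'}}$. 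The nestedness $C_j\subseteq C_{j'}\subseteq C_\ell$ together with the uniqueness clause of Proposition~\ref{prop:stronglinking}(iv) then force the key compatibility $\phi_{j,j'}=\phi_j^{-1}\circ\phi_{j'}$.

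Now set $\Phi_j:=\phi_j(\FS(X_j,B_j))\subseteq U_k(B_1)$. Lemma~\ref{lem:numcompact} with $\theta=w\le k+1$ bounds $\abs{U_k(B_1)}$ by $2^{9k+11}\abs{\F}^{k(k+1)}2^{2(2k+3)k}$, so the choice of $\ell$ forces $\Phi_j=\Phi_{j'}$ for some $j<j'$ by pigeonhole; equivalently $\phi_{j,j'}(\FS(X_{j'},B_{j'}))=\FS(X_j,B_j)$. Let $N:=M/C_{j,j'}\setminus D_{j,j'}$ on $X_j\cup(E(M)-X_{j'})$ and let $\pi$ denote its associated projection; Proposition~\ref{prop:stronglinking}(i)(ii) says $\pi$ is injective on both $\spn{X_j}$ and $\spn{E(M)-X_{j'}}$, and hence the boundary of $X_j$ in $N$ equals $\pi(B_j)=\pi(B_{j'})$. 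I then apply Lemma~\ref{lem:fullsetkey} with $A$ the configuration of $M$, $A'$ the configuration of $N$, $(A_1,A_2)=(X_{j'},E(M)-X_{j'})$, $(A_1',A_2')=(\pi(X_j),\pi(E(M)-X_{j'}))$, and $\phi=\pi|_{B_{j'}}$. The $A_2$-side condition is immediate from the injectivity of $\pi$ on $\spn{E(M)-X_{j'}}$, while the $A_1$-side condition reduces, via injectivity of $\pi$ on $\spn{X_j}$ and the identity $\pi|_{B_{j'}}=\pi|_{B_j}\circ\phi_{j,j'}$, to $\phi_{j,j'}(\FS(X_{j'},B_{j'}))=\FS(X_j,B_j)$, which is precisely the pigeonhole output. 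Therefore $N$ has path-width larger than~$k$; as $X_{j'}-X_j\ne\emptyset$, any $e\in C_{j,j'}$ (respectively any $e\in D_{j,j'}$) makes $N$ a minor of $M/e$ (respectively $M\setminus e$), finishing the proof. The main obstacle is establishing the nested compatibility $\phi_{j,j'}=\phi_j^{-1}\circ\phi_{j'}$ of the canonical maps---in particular verifying that truncating $C_\ell$ and $D_\ell$ to the intermediate windows still gives minors of connectivity~$w$; without this compatibility one would have to pigeonhole on pairs of full sets, roughly squaring the threshold for $\ell$ and breaking the bound claimed in Proposition~\ref{prop:main}.
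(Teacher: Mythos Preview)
Your argument is correct and essentially identical to the paper's proof: the same reduction to path-width exactly $k+1$, the same use of Theorem~\ref{thm:nagamochi} and Lemma~\ref{lem:cutcount} to find $\ell$ nested cuts of common value, the same pigeonhole on full sets transported to a common boundary via Proposition~\ref{prop:stronglinking}, and the same conclusion through Lemma~\ref{lem:fullsetkey}. The only difference is cosmetic---the paper quotients once by $\spn{C_\ell}$ to push every $B_j$ into a single target space, whereas you name the individual bijections $\phi_j,\phi_{j,j'}$; in particular the compatibility $\phi_{j,j'}=\phi_j^{-1}\circ\phi_{j'}$ you flag as the main obstacle is immediate from a \emph{single} application of Proposition~\ref{prop:stronglinking}(iv) with the full pair $(C_\ell,D_\ell)$ (its uniqueness clause together with the ``moreover $x-y\in\spn{C\cap(Z\triangle Z')}$'' part), so the intermediate sub-minor connectivity check and the additional normalization that $C_\ell$ be independent are unnecessary, and the injectivity of your map $\pi$ on $\spn{X_j}$ and $\spn{E(M)-X_{j'}}$ needs part~(iii) as well as (i)--(ii).
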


\begin{proof}
    Let $A$ be a configuration in a vector space over $\F$ such that $M=M(A)$.
    We may assume that $M\setminus e$ and $M/e$ has path-width at most~$k$ for every $e\in E(M)$. This implies that $M$ has path-width exactly $k+1$
    and by Theorem~\ref{thm:nagamochi}, there is a linked path\hyp{}decomposition $\sigma=e_1,e_2,\ldots,e_n$ of $M$ of width $k+1$.
    We identify $e_i$ with a vector in~$A$.
    
    For $i=0,1,2,\ldots,n$, let 
    $a_i=\lambda_M(\{e_1,e_2,\ldots,e_i\})$. 
    Then $0\le a_i\le k+1$ for all $i$. 

    By Lemma~\ref{lem:cutcount}, there exist integers $0\le t_1<t_2<\cdots<t_\ell\le n$ and $0\le \theta\le k+1$ such that 
    $a_{t_1}=a_{t_2}=\cdots=a_{t_\ell}=\theta$ 
    and $a_i\ge \theta$ for all $t_1\le i\le t_\ell$.
    Let $A_i=\{e_1,e_2,\ldots,e_{t_i}\}$ and $B_i=\partial_A(A_i)$ for $1\le i\le \ell$.

    Since $\sigma$ is a linked path\hyp{}decomposition, 
    $\lambda_M(X)\ge \theta$ for all 
    $A_1\subseteq X\subseteq A_\ell$. 
    By Theorem~\ref{thm:linking}, 
    there are  disjoint subcollections $C$, $D$ of $A$
    such that 
    $C\cup D=A-(A_1\cup (A-A_\ell))$ and 
    $\lambda_{M/C\setminus D}(A_1)=\theta$.
    We may assume that $D$ is coindependent, see~\cite[Lemma 3.3.2]{Oxley2011a}.
    Let $\pi:\spn{A}\to\spn{A}/\spn{C}$ be the linear transformation mapping $x\in \spn{A}$ to an equivalence class $[x]$ containing $x$ 
    where two vectors $x$ and $x'$ are equivalent if and only if $x-x'\in \spn{C}$.
    Let $B=\pi(\partial_A(A_1))$.

    By (iii) and (iv) of Proposition~\ref{prop:stronglinking}, 
    $\dim B=\theta$ and 
    $\pi(\partial_A(A_i))=\pi(\partial_A(A_j))$ for all $1\le i<j\le \ell$.

    Observe that $\pi(\FS(A_i,\partial_A(A_i)))\subseteq U_k(B)$.
    Since $\ell$ is big enough, by Lemma~\ref{lem:numcompact} and the pigeon-hole principle, 
    there exist $1\le i<j\le \ell$ such that 
    $\pi(\FS(A_i,\partial_A(A_i)))=\pi(\FS(A_j,\partial_A(A_j)))$.

    Let $C'=C\cap (A_{j}-A_i)$ and $D'=D\cap (A_j-A_i)$.
    Let $\phi: \spn{A}\to \spn{A}/\spn{C'}$
    be the linear transformation mapping $x\in \spn{A}$ 
    to an equivalence class containing $x$ 
    where two elements $x$, $y$ are equivalent if and only if $x-y\in\spn{C'}$.

    Let $B'=\phi(\partial_A(A_i))$.
    Since $C'\subseteq C$, by (iii)  of Proposition~\ref{prop:stronglinking},
    we have $\dim B'=\theta$. 
    Furthermore, from (iv) of Proposition~\ref{prop:stronglinking}, we deduce that 
    for $x\in \partial_A(A_i)$ and $y\in \partial_A(A_j)$, 
    $\pi(x)=\pi(y)$  if and only if $\phi(x)=\phi(y)$.
    Therefore,  $B'=\phi(\partial_A(A_j))$
    and 
    $\phi(\FS(A_i,\partial_A(A_i)))=\phi(\FS(A_j,\partial_A(A_j)))$.

    We claim that $\phi$ is an injection on $\spn{A_i}$. Suppose that $x,y\in \spn{A_i}$ and $x-y\in \spn{C'}=\spn{C\cap (A_j-A_i)}\subseteq \spn{A-A_i}$.
    Then $x-y\in \spn{C}$ and by (i) of Proposition~\ref{prop:stronglinking}, 
    we deduce that $x-y\in \spn{C\cap A_i}\subseteq \spn{A_i}$. 
    This would imply that $x-y\in \partial_A(A_i)$ and therefore $x=y$ by (iii) of Proposition~\ref{prop:stronglinking}. 
    By symmetry, we can also deduce that $\phi$ is an injection on $\spn{A-A_j}$.
    
    Let $N=M(A)/C'\setminus D'$.
    Then $A'=\phi(A_i\cup (A-A_j))$ is a configuration in the vector space $\spn{A}/\spn{C'}$ such that $N=M(A')$.
    Since $B'\subseteq \spn{\phi(A_i)}$  and $B'\subseteq \spn{\phi(A-A_j)}$, 
    we have $B'\subseteq \partial_{A'}(\phi(A_i))$. 
    By Lemma~\ref{lem:connminor}, $\dim  \partial_{A'}(\phi(A_i)) \le \theta$ and therefore $B'=\partial_{A'}(\phi(A_i))$.

    Since $\phi$ is an injection on $A_i$,
    \[ \FS(\phi(A_i),\partial_{A'}(\phi(A_i)))=\phi(\FS(A_j),\partial_{A}(A_j)).\] 
    Since $\phi$ is an injection on $A-A_j$, trivially 
    \[ \FS(\phi(A-A_j),\partial_{A'}(\phi(A-A_j)))=\phi(\FS(A-A_j),\partial_{A}(A-A_j)).\] 

    Since $N$ is a proper minor of $M$, the path-width of $N$ is at most~$k$. 
    By Lemma~\ref{lem:fullsetkey}, 
    $M$ has path-width at most~$k$ if and only if $N$ has path-width at most~$k$
    and therefore we deduce that the path-width of $M$ is at most~$k$, contradicting the assumption. 
\end{proof}

\section{Obstructions to linear rank-width}\label{sec:linearrankwidth}

\subsection{Basic definitions}

All graphs in this section are simple, having no loops and no parallel edges.

For a graph $G$,
the \emph{cut-rank} function $\rho_G$ of $G$
is defined as a function that maps a set $X$ of vertices of $G$
to the rank of the $X\times (V(G)-X)$ matrix over the binary field 
whose $ab$-entry is $1$ if and only if $a\in X$ is adjacent to $b\in V(G)-X$.
It is known that $\rho_G$ is symmetric and submodular, see Oum and Seymour~\cite{OS2004}, and therefore it is a connectivity function. 
We remark that $\rho_G(\emptyset)=\rho_G(V(G))=0$.
The \emph{linear rank-width} of a graph $G$ is defined to be the path-width of $\rho_G$.

For a pair $(x,y)$ of distinct vertices of a graph $G$, 
\emph{flipping} $(x,y)$ is an operation that adds an edge $xy$ if $x$, $y$ are non-adjacent in $G$ 
and deletes the edge $xy$ otherwise.
For an edge $uv$ of a graph $G$, 
we write $G\pivot uv$ to denote the graph $G'$ on $V(G)$ obtained by the following procedures.
\begin{enumerate}
    \item For every pair $x\in N(u)\cap N(v)$ and $y\in N(u)-N(v)$, 
    flip $(x,y)$.
    \item For every pair $x\in N(u)\cap N(v)$ and $y\in N(v)-N(u)$, 
    flip $(x,y)$.
    \item For every pair $x\in N(u)-N(v)$ and $y\in N(v)-N(u)$, 
    flip $(x,y)$.
    \item Swap the label of $u$ and $v$. 
\end{enumerate}
This operation is called the \emph{pivot}.
We remark that the purpose of the last operation  is to make $G\pivot uv \pivot vw = G\pivot uw$, see Oum~\cite{Oum2004}.
Here is an important property of pivots with respect to the cut-rank function. 
\begin{proposition}[See Oum~\cite{Oum2004}]
    If $H=G\pivot uv$, then $\rho_H(X)=\rho_G(X)$ for all $X\subseteq V(G)$.
\end{proposition}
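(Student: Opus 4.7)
The plan is to reduce the problem to linear algebra. By definition, $\rho_G(X)$ equals the rank over $\mathrm{GF}(2)$ of the bipartite adjacency matrix $M_G(X)$ whose rows are indexed by $X$ and columns by $V(G)\setminus X$. I aim to exhibit invertible $\mathrm{GF}(2)$-matrices $P$ and $Q$, together with row/column permutations coming from the label exchange of $u$ and $v$, such that $M_H(X)=P\,M_G(X)\,Q$ modulo those permutations. Since multiplication by invertible matrices and permutation both preserve rank, this immediately yields $\rho_H(X)=\rho_G(X)$.

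I would organize the argument by case analysis on where $u$ and $v$ sit relative to $X$. The conceptual backbone in every case is the classical identification of the graph pivot $G\pivot uv$ with the principal pivot transformation (Schur complement) of the full symmetric adjacency matrix $A_G$ at the self-inverse $2\times 2$ submatrix $\bigl(\begin{smallmatrix}0&1\\1&0\end{smallmatrix}\bigr)$ indexed by $\{u,v\}$. Concretely, for $x,y\in V(G)\setminus\{u,v\}$ the new entry satisfies $A_H(x,y)=A_G(x,y)+\chi_u(x)\chi_v(y)+\chi_v(x)\chi_u(y)$, where $\chi_u$ and $\chi_v$ are the indicator vectors of $N(u)\setminus\{v\}$ and $N(v)\setminus\{u\}$. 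This explicit rank-$2$ update can be read off the four-step definition of $\pivot$ by computing, for each ordered type pair in $\{(1,1),(1,0),(0,1)\}$, whether the pair $(x,y)$ is flipped. The remaining entries (edges incident with $u$ or $v$) together with the label swap correspond to the block-triangular factors in PPT.

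Restricted to the bipartite block $M_G(X)$, the rank-$2$ update becomes ``add the row indexed by $u$ and/or $v$ into prescribed other rows'' together with the symmetric statement for columns, where the precise pattern depends on whether $\{u,v\}\subseteq X$, $\{u,v\}\cap X=\emptyset$, or the pair is split across the cut. Each such update is an elementary operation implementable by left or right multiplication with an invertible $\mathrm{GF}(2)$-matrix; combining them gives the factorization $M_H(X)=P\,M_G(X)\,Q$ after absorbing the label-swap as a single row-swap (on rows of $M_G(X)$ indexed by $u,v$ if both lie in $X$), a single column-swap (if both lie in $V(G)\setminus X$), or a single swap of a row and a column (in the split case, which additionally exchanges the index sets $X$ and $V(G)\setminus X$ on the vertices $u,v$).

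The main obstacle will be the careful algebraic bookkeeping in the split case $u\in X$, $v\in V(G)\setminus X$: one must verify that the four flipping rules really do implement the PPT formula on this block and then decompose the resulting row and column operations cleanly along the partition $X\cup(V(G)\setminus X)$ so that the identity descends to the restricted block. The easier cases then reduce to identifying the relevant elementary row or column updates from the flipping rules, after which rank invariance is immediate.
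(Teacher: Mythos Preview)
The paper does not prove this proposition; it is stated with a citation to Oum~\cite{Oum2004} and used as a black box. So there is no ``paper's own proof'' to compare against beyond the original reference.

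Your plan is sound and is essentially the matrix-theoretic route taken in Oum's paper: the graph pivot $G\pivot uv$ is exactly the principal pivot transform of the adjacency matrix $A_G$ on the nonsingular principal $2\times 2$ block indexed by $\{u,v\}$, followed by the label swap, and the cut-rank is the $\mathrm{GF}(2)$-rank of the off-diagonal block $A_G[X,\bar X]$. Your case analysis will work. In the case $\{u,v\}\subseteq X$ (and symmetrically $\{u,v\}\cap X=\emptyset$), the PPT formula shows that the rows (resp.\ columns) of $A_H[X,\bar X]$ indexed by $X\setminus\{u,v\}$ are obtained from those of $A_G[X,\bar X]$ by adding $\mathrm{GF}(2)$-combinations of rows $u$ and $v$, while rows $u,v$ themselves are just swapped; this is a left-multiplication by an invertible matrix. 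In the split case $u\in X$, $v\in\bar X$, the $(u,v)$ entry of $A_G[X,\bar X]$ equals $1$, and the effect of the pivot on this block is precisely a Gaussian-elimination pivot on that entry together with the index relabelling; rank is preserved.

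One remark: there is a shorter argument that avoids the case split entirely. Over $\mathrm{GF}(2)$ the matrices $(I_n\;\; A_G)$ and $(I_n\;\; A_H)$ have the same row space after swapping the column of the identity block labelled $u$ with the column of the $A$-block labelled $u$, and likewise for $v$ (this is exactly what PPT does). Since $\rho_G(X)$ can be read off as $\dim\bigl(\langle\text{cols in }X\rangle\cap\langle\text{cols in }\bar X\rangle\bigr)$ for the configuration given by the columns of $(I_n\;\; A_G)$ with the pairing $i\leftrightarrow\{e_i,a_i\}$, invariance of the row space gives $\rho_H=\rho_G$ in one stroke. This is closer in spirit to how the result is packaged in \cite{Oum2004} and to the subspace-arrangement viewpoint used later in the present paper (Section~\ref{subsec:graphtosubspace}). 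Your approach is correct but more hands-on; the row-space argument buys uniformity at the cost of one extra reformulation of $\rho_G$.
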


We say that a graph $H$ is a \emph{pivot-minor} of a graph $G$
if $H$ is an induced subgraph of a graph obtained from $G$ by applying some sequence of pivots.
We say that a pivot-minor $H$ of $G$ is \emph{proper} if $V(H)\neq V(G)$.
Since deleting a vertex never increases the cut-rank function, 
we deduce the following easily from the previous proposition.
\begin{corollary}
    If $H$ is a pivot-minor of $G$, then the linear rank-width of $H$ is at most the linear rank-width of $G$.
\end{corollary}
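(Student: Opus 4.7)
The plan is to combine the two ingredients already established just before the corollary: the preceding proposition states that a pivot is cut-rank preserving, and the observation that deleting a vertex cannot increase the cut-rank function (since one is simply discarding a row and column of the defining matrix, which cannot increase its rank).

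First, I would observe that if $H=G\pivot uv$, then by the preceding proposition $\rho_H=\rho_G$ as set functions on the common ground set $V(G)=V(H)$, so every linear layout of the vertex set has the same width with respect to either function; hence the path-widths of $\rho_G$ and $\rho_H$ coincide, meaning $G$ and $H$ have equal linear rank-width. Iterating, any graph $G'$ reachable from $G$ by a sequence of pivots has exactly the same linear rank-width as $G$.

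Next, I would handle the induced subgraph step. By definition, $H$ is an induced subgraph of some $G'$ obtained from $G$ by a sequence of pivots; let $S=V(G')-V(H)$. For any $X\subseteq V(H)$, the matrix defining $\rho_H(X)$ is the submatrix of the matrix defining $\rho_{G'}(X)$ obtained by discarding the columns indexed by $S-X$ and the rows indexed by $S\cap X$ (interpreting $X\subseteq V(G')$), so $\rho_H(X)\le \rho_{G'}(X)$. Given an optimal linear layout $\sigma=v_1,v_2,\ldots,v_n$ of $V(G')$, let $\sigma'$ be the subsequence obtained by deleting the vertices in $S$. Each prefix of $\sigma'$ is exactly $P\cap V(H)$ for some prefix $P$ of $\sigma$, so by the inequality above, the width of $\sigma'$ with respect to $\rho_H$ is at most the width of $\sigma$ with respect to $\rho_{G'}$. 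Hence the linear rank-width of $H$ is at most that of $G'$, which equals that of $G$.

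There is no substantial obstacle in this proof; it is a direct assembly of the two facts mentioned above. The only point that might merit explicit attention is the correspondence between prefixes of $\sigma$ and prefixes of $\sigma'$ under intersection with $V(H)$, but this is immediate from the construction of $\sigma'$ as the restriction of $\sigma$.
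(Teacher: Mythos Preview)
Your approach is exactly the paper's: the paper does not write out a proof but simply remarks, just before the corollary, that it follows easily from the cut-rank invariance under pivots together with the fact that vertex deletion cannot increase cut-rank. Your proposal spells this out.

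One small imprecision worth fixing: the inequality you prove, $\rho_H(X)\le\rho_{G'}(X)$ for $X\subseteq V(H)$, is not quite the one you use. To bound the width of $\sigma'$ by the width of $\sigma$ you need $\rho_H(P\cap V(H))\le\rho_{G'}(P)$ for each prefix $P$ of $\sigma$, not $\rho_H(P\cap V(H))\le\rho_{G'}(P\cap V(H))$; the set $P\cap V(H)$ is generally not a prefix of $\sigma$, so $\rho_{G'}(P\cap V(H))$ need not be bounded by the width of~$\sigma$. The fix is immediate and uses the same submatrix idea: with $X=P\cap V(H)$ one has $X\subseteq P$ and $V(H)-X\subseteq V(G')-P$, so the $X\times(V(H)-X)$ matrix over $\mathbb F_2$ defining $\rho_H(X)$ is a submatrix of the $P\times(V(G')-P)$ matrix defining $\rho_{G'}(P)$, giving $\rho_H(X)\le\rho_{G'}(P)$ directly.
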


\subsection{Tutte's linking theorem for pivot-minors}

Oum~\cite{Oum2004} proved an analog of Tutte's linking theorem for  pivot-minors. 
\begin{theorem}\label{thm:oumlink}
    Let $G$ be a graph and let $S$, $T$ be disjoint vertex sets of $G$. 
    Then there exists a pivot-minor $H$ on $S\cup T$ such that
    \[ 
        \rho_H(S)=\min_{S\subseteq X\subseteq V(G)-T} \rho_G(X).
    \] 
\end{theorem}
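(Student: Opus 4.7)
The plan is to mimic the proof of Tutte's linking theorem (Theorem~\ref{thm:linking}) using the graph-theoretic analogues of matroid deletion and contraction, namely vertex deletion and the composite operation of pivoting on an edge $vw$ followed by deleting $v$. Set $k := \min_{S \subseteq X \subseteq V(G) - T} \rho_G(X)$. Since pivoting preserves $\rho_G$ and vertex deletion never increases the cut-rank of any set, every pivot-minor $H$ of $G$ on $S \cup T$ automatically satisfies $\rho_H(S) \leq k$, so the task is to realize equality.

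I would proceed by induction on $\abs{V(G) - S - T}$. When $V(G) = S \cup T$, take $H = G$. Otherwise, pick any $v \in V(G) - (S \cup T)$; the inductive step then reduces to the following key claim: either (a) the graph $G' := G - v$ already satisfies $\min_{S \subseteq X \subseteq V(G') - T} \rho_{G'}(X) \geq k$, so we may simply delete $v$, or (b) there exists a vertex $w$ with $vw \in E(G)$ such that $G'' := (G \pivot vw) - v$ satisfies the analogous inequality. Either alternative allows us to apply the induction hypothesis (noting that a pivot swaps the labels of $v$ and $w$, which may require a small relabeling when $w \in S \cup T$).

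To establish the key claim, I would argue by contradiction. If both (a) and (b) fail for every eligible $w$, then each failed option supplies a ``bad set'' $X$ witnessing $\rho < k$ after the operation, while $\rho_G(X) \geq k$ in $G$ itself by the definition of $k$. Using the symmetry and submodularity of $\rho_G$, together with the invariance of cut-rank under pivoting, I would combine these witnesses across the various pivot options via uncrossing to produce a single set $X_0$ with $S \subseteq X_0 \subseteq V(G) - T$ but $\rho_G(X_0) < k$, contradicting the definition of $k$. An alternative (and probably cleaner) route is to encode $G$ as a binary matroid $\mathcal{M}(G)$ represented by $[I \mid A_G]$ over $\mathbb{F}_2$, verify that $\rho_G(X) = \lambda_{\mathcal{M}(G)}(X \cup (V - X)^*)$ and that graph pivots correspond to matrix pivots, then transport Tutte's theorem across this correspondence.

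The hard part is the case analysis underlying the contradiction (or, in the matroid route, the verification that the minor produced by Tutte's theorem really has the restricted form $\mathcal{M}(H)$ for a pivot-minor $H$). Unlike the matroid setting where only $M \setminus e$ and $M / e$ are available, here there are many candidate reductions---direct deletion plus one pivot-then-delete per edge incident to $v$---and each transforms the neighborhood structure of $v$ differently. The crucial leverage is the invariance of $\rho_G$ under pivoting, which should let one turn any obstruction to direct deletion into the existence of a specific edge $vw$ whose pivot neutralizes the obstruction, at which point $(G \pivot vw) - v$ meets the requirement.
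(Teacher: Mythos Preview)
The paper does not prove Theorem~\ref{thm:oumlink}; it is quoted from Oum~\cite{Oum2004} as a known result, so there is no in-paper argument to compare against.

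On the merits of your sketch: the inductive skeleton---peel off one vertex $v\notin S\cup T$ at a time, via either straight deletion or pivot-then-delete on some edge $vw$---is indeed the shape of Oum's original argument. But your outline stops exactly where the substance begins. The sentence ``combine these witnesses across the various pivot options via uncrossing to produce a single set $X_0$ with $\rho_G(X_0)<k$'' is a hope, not a proof. In Oum's argument the dichotomy is resolved by a specific inequality relating $\rho_{G-v}(X)$, $\rho_{(G\pivot vw)-v}(X)$, $\rho_G(X)$, and $\rho_G(X\cup\{v\})$ for a \emph{single} fixed neighbor $w$; one does not range over all neighbors of $v$, and your phrasing ``across the various pivot options'' suggests you have not isolated the right lemma. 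Until that inequality (or an equivalent) is stated and proved, the contradiction step is unsubstantiated.

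Your alternative matroid route is viable---indeed the paper itself uses exactly this device to derive Proposition~\ref{prop:stronglinking2} from Proposition~\ref{prop:stronglinking} via the matroid $M_G$. But the step you correctly flag as nontrivial, namely that the minor of $M_G$ produced by Tutte's theorem can always be realised as $M_H$ for some pivot-minor $H$ of $G$, is essentially the content of the theorem, and you have not carried it out either.
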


\subsection{From graphs to subspace arrangements}\label{subsec:graphtosubspace}
Let us now show how to represent a graph with a \emph{subspace arrangement}. A \emph{subspace arrangement} $\mathcal V$ over a field $\mathbb F$ is a finite set
of subspaces of a finite-dimensional vector space over $\mathbb F$.  We usually write a subspace arrangement as a family $\mathcal V=\{V_i\}_{i\in E}$ of
subspaces indexed by a finite set $E$.

A \emph{linear layout} of a subspace arrangement $\mathcal V$
is a permutation $\sigma=V_1,V_2,\ldots,V_n$ of $\mathcal V$.
The \emph{width} of a linear layout $\sigma =V_1,V_2,\ldots,V_n$ is equal to 
\[ 
\max_{1\le i<n} \dim \bigl((V_1+V_2+\cdots+V_i)\cap (V_{i+1}+V_{i+2}+\cdots+V_n)\bigr).
\] 
Note that this function is a connectivity function on $\mathcal V$.
The \emph{path-width} of $\mathcal V$ is the minimum width of linear layouts of $\mathcal V$.
If $\abs{\mathcal V}\le 1$, then we define the width of its linear layout to be $0$ and its path-width to be $0$.

As observed in \cite[Section VII]{JKO2016}, 
for a matroid $M$ represented by a configuration $A$, 
if we take $\mathcal V=\{ \spn{v}: v\in A\}$, 
then the path-width of $\mathcal V$ is equal to the path-width of $M(A)$.

We are now going to review the construction of a subspace arrangement from graphs that appeared in 
\cite[Section VIII]{JKO2016}.
This construction allows us to relay the concept of linear rank-width to the path-width of its corresponding subspace arrangement.
For a graph $G$ on the vertex set $\{1,2,\ldots,n\}$, 
let us define a subspace arrangement over the binary field as follows.
Let $\{e_1,e_2,\ldots,e_n\}$ be the standard basis of $\mathbb F_2^n$ where $\mathbb F_2$ is the binary field.
Let 
\[ v_i=\sum_{j\in N_G(i)} e_j,\]
where $N_G(i)$ denotes the set of neighbors of $i$.
Let $V_i=\spn{e_i,v_i}$
and let $\mathcal V_G=\{ V_i\}_{i\in V(G)}$.

Here is the key observation.
\begin{lemma}[Jeong, Kim, and Oum~{\cite[Lemma 52]{JKO2016}}]\label{lem:cutrank}
    For $X\subseteq V(G)$, 
    \[ 
    \dim \left((\sum_{i\in X} V_i) \cap (\sum_{j\in V(G)-X} V_j)\right) = 2\rho_G(X).
    \] 
\end{lemma}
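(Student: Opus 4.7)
The plan is to decompose $\mathbb F_2^n=W_X\oplus W_Y$, where $W_X:=\spn{e_i:i\in X}$ and $W_Y:=\spn{e_j:j\in V(G)-X}$, split each $v_i$ according to this decomposition, absorb the $W_X$-parts of the $v_i$ (for $i\in X$) into $W_X$ itself, and then identify the intersection with a pair of row spaces of complementary submatrices of the adjacency matrix of $G$.

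Writing $Y:=V(G)-X$, $U_X:=\sum_{i\in X}V_i$, $U_Y:=\sum_{j\in Y}V_j$, and $v_i^Y:=\sum_{j\in N_G(i)\cap Y}e_j$ for $i\in X$, I would first note that $U_X=W_X+\spn{v_i:i\in X}=W_X+V_X'$ with $V_X':=\spn{v_i^Y:i\in X}\subseteq W_Y$, using that $v_i-v_i^Y=\sum_{j\in N_G(i)\cap X}e_j\in W_X$. Since $W_X\cap W_Y=\{0\}$, this is a direct sum $U_X=W_X\oplus V_X'$. Symmetrically, with $v_j^X:=\sum_{i\in N_G(j)\cap X}e_i$ for $j\in Y$ and $V_Y':=\spn{v_j^X:j\in Y}\subseteq W_X$, we get $U_Y=W_Y\oplus V_Y'$. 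The key step would then be to prove $U_X\cap U_Y=V_X'\oplus V_Y'$: the $\supseteq$ direction is immediate since $V_X'\subseteq U_X$ by construction and $V_X'\subseteq W_Y\subseteq U_Y$ (and symmetrically for $V_Y'$), while for $\subseteq$ any $z\in U_X\cap U_Y$ admits decompositions $z=a+b=c+d$ with $a\in W_X$, $b\in V_X'\subseteq W_Y$, $c\in W_Y$, $d\in V_Y'\subseteq W_X$, so comparing components in $W_X\oplus W_Y$ forces $a=d\in V_Y'$ and $b=c\in V_X'$, placing $z$ in $V_X'+V_Y'$. The sum is direct by the same $W_X\cap W_Y=\{0\}$ argument.

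To finish, the generators of $V_X'$, written in the basis $\{e_j\}_{j\in Y}$, are exactly the rows of the $X\times Y$ submatrix of the $\mathbb F_2$-adjacency matrix of $G$, so $\dim V_X'=\rho_G(X)$ by definition of the cut-rank; symmetry of the adjacency matrix (its $Y\times X$ submatrix is the transpose) gives $\dim V_Y'=\rho_G(X)$ as well, and adding the two yields the claimed $\dim(U_X\cap U_Y)=2\rho_G(X)$. The one conceptual subtlety is that an individual $V_i$ does \emph{not} in general equal $\spn{e_i,v_i^Y}$: if $i$ has a neighbor inside $X$ then the $W_X$-component $v_i^X$ is nonzero and does not lie in $\spn{e_i}$. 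The decomposition only becomes clean after summing over all of $X$, which is precisely what lets the stray $v_i^X$ components be absorbed into $W_X$; apart from this point, the proof reduces to elementary linear algebra in the standard basis.
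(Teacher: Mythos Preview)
Your argument is correct. The paper does not actually prove this lemma; it is quoted from Jeong, Kim, and Oum~\cite{JKO2016} without proof, so there is no in-paper argument to compare against. Your proof is a clean, self-contained verification: the decomposition $U_X=W_X\oplus V_X'$ (and its symmetric counterpart) is exactly the right move, the identification $U_X\cap U_Y=V_X'\oplus V_Y'$ via componentwise comparison in $W_X\oplus W_Y$ is correct, and the final rank count is the definition of $\rho_G$. The remark you flag about individual $V_i$ not splitting as $\spn{e_i,v_i^Y}$ is accurate and worth keeping, since it explains why one must first pass to the sum $U_X$ before the direct-sum structure emerges.
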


\begin{corollary}
    The path-width of $\mathcal V_G$ is equal to twice the linear rank-width of $G$.
\end{corollary}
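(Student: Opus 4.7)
The plan is to observe that this corollary follows almost immediately from Lemma~\ref{lem:cutrank} by setting up a bijection between linear layouts on the two sides and matching widths prefix by prefix. First I would note that, by construction, $\mathcal V_G=\{V_i\}_{i\in V(G)}$ is indexed by $V(G)$, so every permutation $\sigma=i_1,i_2,\ldots,i_n$ of $V(G)$ corresponds to the permutation $V_{i_1},V_{i_2},\ldots,V_{i_n}$ of $\mathcal V_G$, and vice versa. This gives a natural bijection between linear layouts used to define the linear rank-width of $G$ and linear layouts used to define the path-width of $\mathcal V_G$.

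Next I would apply Lemma~\ref{lem:cutrank} with $X=\{i_1,i_2,\ldots,i_j\}$ for each $1\le j<n$, which gives
\[
\dim\left(\Bigl(\sum_{\ell\le j} V_{i_\ell}\Bigr)\cap\Bigl(\sum_{\ell>j} V_{i_\ell}\Bigr)\right) = 2\rho_G(\{i_1,\ldots,i_j\}).
\]
Taking the maximum over $1\le j<n$, the width of the linear layout $V_{i_1},\ldots,V_{i_n}$ of $\mathcal V_G$ is exactly twice the width of the layout $i_1,\ldots,i_n$ with respect to $\rho_G$. Since the bijection above preserves this factor-of-two relationship on every individual layout, taking the minimum on both sides yields that the path-width of $\mathcal V_G$ equals twice the path-width of $\rho_G$, which by definition is twice the linear rank-width of $G$.

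The only edge cases are $\abs{V(G)}\le 1$, which are handled directly by the convention stated earlier that the path-width of an arrangement of size at most one is $0$, together with the fact that $\rho_G\equiv 0$ on such graphs. I do not anticipate any genuine obstacle here: the corollary is a direct prefix-by-prefix consequence of the dimension identity in Lemma~\ref{lem:cutrank}, and all the real content already lives in that lemma.
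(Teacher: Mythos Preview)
Your proposal is correct and is exactly the argument the paper has in mind: the corollary is stated without proof in the paper, precisely because it is the immediate prefix-by-prefix consequence of Lemma~\ref{lem:cutrank} that you spell out. There is nothing to add.
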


For a subset $X$ of $V(G)$,
let 
\begin{align*}
    I_X&=\{ e_i: i\in X\},\\
    A_X&=\{ v_i: i\in X\}, \text{ and }\\
    \partial_X&=\spn{I_X\cup A_X}\cap \spn{I_{V(G)-X}\cup A_{V(G)-X}}.
\end{align*}
By Lemma~\ref{lem:cutrank},  $\dim \partial_X=2\rho_G(X)$.
One can see that $I_Z$ is a set of some column vectors in the $n\times n$ identity matrix
and $A_Z$ is a set of some column vectors in the adjacency matrix of $G$.
Let $M_G$ be the binary matroid represented by the matrix $( I_n \,\, A(G) )$, where 
$I_n$ is the $n\times n$ identity matrix 
and $A(G)$ is the adjacency matrix of $G$.

Now, by applying Proposition~\ref{prop:stronglinking}, we deduce the following.
\begin{proposition}\label{prop:stronglinking2}
    Let $G$ be a graph and 
    let $S$, $T$ be disjoint sets of vertices of $G$
    such that 
    \[ \rho_{G[S\cup T]}(S)=\min_{S\subseteq X\subseteq V(G)-T} \rho_G(X)=k.\] 
    Let $C=V(G)-(S\cup T)$.
    Then for all subsets $Z$ of $V(G)$, 
    if $S\subseteq Z\subseteq V(G)-T$ and $\rho_G(Z)=k$, then the following hold.
    \begin{enumerate}[(i)]
        \item For all $x,y\in \spn{I_Z\cup A_Z}$, $x-y\in \spn{I_C}$ if and only if $x-y\in \spn {I_{C\cap Z}}$.
        \item For all $x,y\in \spn{I_{V(G)-Z}\cup A_{V(G)-Z}}$, $x-y\in \spn{I_C}$ if and only if $x-y\in \spn{I_{C-Z}}$.
        \item For all $x,y\in \partial_Z$, $x-y\in \spn{I_C}$ if and only if $x=y$.
        \item If $Z'$ is also a subset of $V(G)$ such that $S\subseteq Z\subseteq V(G)-T$ and $\rho_G(Z')=k$, then for each $x\in\partial_{Z'}$, 
        there is a unique $y\in \partial_Z$
        such that 
        $x-y\in \spn{I_C}$. Moreover $x-y\in \spn{I_{C\cap (Z\triangle  Z')}}$. 
    \end{enumerate}
\end{proposition}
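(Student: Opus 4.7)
The plan is to deduce Proposition~\ref{prop:stronglinking2} from Proposition~\ref{prop:stronglinking} applied to the configuration $A' = \{e_i : i \in V(G)\} \cup \{v_i : i \in V(G)\}$ over $\F_2$, which represents the matroid $M_G$. In this application, the roles of $S$, $T$, $C$, $D$ of Proposition~\ref{prop:stronglinking} will be played by $I_S \cup A_S$, $I_T \cup A_T$, $I_C$, and $A_C$ respectively, and each $Z \subseteq V(G)$ appearing in the conclusion of Proposition~\ref{prop:stronglinking2} will correspond to $I_Z \cup A_Z \subseteq A'$.

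To justify the application, I need to check three items. First, $(I_C, A_C)$ is a disjoint partition of $A' - (I_S \cup A_S \cup I_T \cup A_T)$, which is immediate. Second, $A_C$ is coindependent in $M_G$ since $\{e_1,\dots,e_n\}$ is a basis disjoint from $A_C$. Third, and most substantially, the minor $N := M_G / I_C \setminus A_C$ on ground set $I_{S\cup T}\cup A_{S\cup T}$ satisfies
\[
    \lambda_N(I_S \cup A_S) = \min_{I_S \cup A_S \subseteq X \subseteq A' - (I_T \cup A_T)} \lambda_{M(A')}(X) = 2k.
\]
Contracting $I_C$ sends each $v_i = \sum_{j \in N_G(i)} e_j$ (for $i \in S \cup T$) to $\sum_{j \in N_G(i) \cap (S \cup T)} e_j$, the neighborhood indicator of $i$ in $G[S \cup T]$, and deleting $A_C$ then leaves precisely the subspace-arrangement matroid $M_{G[S \cup T]}$. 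Hence Lemma~\ref{lem:cutrank} (applied to $G[S \cup T]$) gives $\lambda_N(I_S \cup A_S) = 2\rho_{G[S \cup T]}(S) = 2k$. By Tutte's linking theorem (Theorem~\ref{thm:linking}), the existence of this minor forces $\lambda_{M(A')}(X) \ge 2k$ for every admissible $X$; and choosing $X = I_{Z_0} \cup A_{Z_0}$ for a graph-theoretic minimizer $Z_0$ of $\rho_G$ with $S \subseteq Z_0 \subseteq V(G) - T$ yields $\lambda_{M(A')}(X) = 2\rho_G(Z_0) = 2k$ via Lemma~\ref{lem:cutrank}, so the minimum is exactly $2k$.

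With these hypotheses verified, Proposition~\ref{prop:stronglinking} applies. For each $Z$ with $S \subseteq Z \subseteq V(G) - T$ and $\rho_G(Z) = k$, Lemma~\ref{lem:cutrank} gives $\lambda_{M(A')}(I_Z \cup A_Z) = 2k$, so $I_Z \cup A_Z$ is a valid choice for the role of $Z$ in Proposition~\ref{prop:stronglinking}. Conclusions (i)-(iv) of Proposition~\ref{prop:stronglinking2} then follow by direct translation, using the set identities $I_C \cap (I_Z \cup A_Z) = I_{C \cap Z}$, $I_C - (I_Z \cup A_Z) = I_{C - Z}$, $\partial_{A'}(I_Z \cup A_Z) = \partial_Z$, and $I_C \cap ((I_Z \cup A_Z) \triangle (I_{Z'} \cup A_{Z'})) = I_{C \cap (Z \triangle Z')}$. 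The main obstacle is identifying the minimum $\min_X \lambda_{M(A')}(X)$ with $2k$: that the minimum is at most $2k$ is clear from Lemma~\ref{lem:cutrank}, but since arbitrary $X \subseteq A'$ need not be of the form $I_{Z_0} \cup A_{Z_0}$, the reverse inequality cannot be read off directly from graph cut-ranks and instead requires the Tutte-linking detour through the explicit minor $M_{G[S \cup T]}$.
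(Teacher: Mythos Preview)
Your proof is correct and follows essentially the same route as the paper's: both apply Proposition~\ref{prop:stronglinking} to the configuration representing $M_G$ with $C = I_C$ and $D = A_C$. You supply more detail than the paper does, in particular the explicit verification (via the identification $N \cong M_{G[S\cup T]}$ and Tutte's linking theorem) that the minimum of $\lambda_{M(A')}(X)$ over \emph{all} admissible $X$, not just those of the form $I_Z \cup A_Z$, equals $2k$---a point the paper leaves to the reader.
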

\begin{proof}
    By Lemma~\ref{lem:cutrank}, 
    $\lambda_{M_G}(I_X\cup A_X)=2\rho_G(X)$ for all $X\subseteq V(G)$.
    Note that the dual matroid $M_G^*$  is represented by $( A_G ~ I_n )$ and therefore 
    $A_C$ is coindependent in $M_G$.
    Thus we can apply Proposition~\ref{prop:stronglinking} for $N=M_G/I_C \setminus A_C$.
\end{proof}

\subsection{Full sets for subspace arrangements}
In Subsection~\ref{subsec:full}, we reviewed the concept of full sets for the context of represented matroids or configurations. 
In fact, Jeong, Kim, and Oum~\cite{JKO2016} introduced full sets in more general form for subspace arrangements. 

Here we are going to show the difference compared to Subsections~\ref{subsec:traj} and \ref{subsec:full}.
For a subspace arrangement $\mathcal V$
and its linear layout $\sigma=V_1,V_2,\ldots,V_n$, 
the \emph{canonical $B$-trajectory} is defined as follows. For $i=0,1,\ldots,n$, let 
\begin{align*}
    L_i&=(\sum_{j=1}^i V_j) \cap B,\\
    R_i&=(\sum_{j=i+1}^n V_j)\cap B,\\
    \lambda_i&=\dim (\sum_{j=1}^i V_j)\cap (\sum_{j=i+1}^n V_j)  - \dim L_i\cap R_i, 
    \\
    a_i&=(L_i,R_i,\lambda_i).
\end{align*}
Then $\Gamma=a_0,a_1,a_2,\ldots,a_n$ is the \emph{canonical $B$-trajectory} of $\sigma$. 
We say that $\Gamma$ is realizable in $\mathcal V$ if it is a canonical $B$-trajectory of some linear layout of $\mathcal V$.

For a subspace arrangement $\mathcal V$, 
$\FS(\mathcal V,B)$ is defined as the set of all compact $B$-trajectories $\Gamma$ of width at most~$k$ such that there exists a $B$-trajectory $\Delta$ realizable in $\mathcal V$ with $\Delta\preceq \Gamma$.

Lemmas~\ref{lem:shrink} and \ref{lem:fullset} are special cases of the following two lemmas easily deduced from the result of  Jeong, Kim, and Oum~\cite[Propositions 35 and 36]{JKO2016}.
\begin{lemma}\label{lem:shrink2}
    Let $\mathcal V$, $\mathcal V'$ be subspace arrangements  over a field $\mathbb F$.
    Let $k$  be a non-negative integer.
    Let $B$ be a subspace of $\spn{\mathcal V\cup \mathcal V'}$.
    If $\FS(\mathcal V,B)=\FS(\mathcal V',B)$, then 
    $\FS(\mathcal V,\{0\})=\FS(\mathcal V',\{0\})$.
\end{lemma}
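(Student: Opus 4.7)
The plan is to follow the same route as in the proof of Lemma~\ref{lem:shrink}, namely to deduce the statement directly from Propositions~35 and~36 of Jeong, Kim, and Oum~\cite{JKO2016}. Those propositions are in fact formulated at the level of subspace arrangements, not only configurations, so Lemma~\ref{lem:shrink} should be seen as the specialization to the case where each subspace of the arrangement is one-dimensional (i.e.\ $\mathcal{V}=\{\spn{v}:v\in A\}$), and the present lemma is essentially a restatement of the same fact with the extra generality that the ambient ``elements'' are subspaces.

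The key structural observation behind this is the following. For any $B$-trajectory $\Gamma$ with statistics $(L_i,R_i,\lambda_i)$, there is a natural ``forget $B$'' operation $\pi_B$ that produces a $\{0\}$-trajectory $\pi_B(\Gamma)$ whose $i$-th statistic is $(\{0\},\{0\},\lambda_i+\dim L_i\cap R_i)$. When $\Gamma$ is the canonical $B$-trajectory of a linear layout $\sigma$ of $\mathcal{V}$, comparing the definitions in Subsection~\ref{subsec:traj} and Subsection~5.4 shows that $\pi_B(\Gamma)$ is precisely the canonical $\{0\}$-trajectory of the same $\sigma$, because passing from $B$ to $\{0\}$ simply reabsorbs the ``inside-$B$'' contribution into $\lambda$. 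A routine check shows that $\pi_B$ respects the ordering $\tle$ and that it commutes, up to $\tle$, with compactification; consequently $\FS(\mathcal{V},\{0\})$ is determined as a function of $\FS(\mathcal{V},B)$. Applying the same construction to $\mathcal{V}'$ and using $\FS(\mathcal{V},B)=\FS(\mathcal{V}',B)$ then forces $\FS(\mathcal{V},\{0\})=\FS(\mathcal{V}',\{0\})$.

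Thus the substantive content lies entirely inside Propositions~35 and~36 of~\cite{JKO2016}, and the only real task is bookkeeping: checking that the notions of canonical $B$-trajectory, compactification, the quasi-order $\tle$, and $\FS$ introduced in Section~\ref{sec:linearrankwidth} coincide with the corresponding notions in~\cite{JKO2016} once one passes to subspace arrangements. Since Subsection~\ref{subsec:graphtosubspace} is set up specifically to match that framework, this alignment is immediate, and the proof reduces to a one-line citation. The anticipated obstacle, if one insisted on a self-contained argument, would be re-establishing the technical lemmas showing that the $\{0\}$-full-set is recoverable from the $B$-full-set via $\pi_B$ followed by compactification; but since all of this machinery already exists in~\cite{JKO2016} in exactly the generality we need, the cleanest proposal is simply to invoke it.
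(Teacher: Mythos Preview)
Your proposal is correct and matches the paper's own treatment: the paper does not give a standalone proof of Lemma~\ref{lem:shrink2} but simply states that it (together with Lemma~\ref{lem:fullset2}) is ``easily deduced from the result of Jeong, Kim, and Oum~\cite{JKO2016},'' with Lemma~\ref{lem:shrink} being the special case for configurations. Your additional sketch of the ``forget $B$'' operation $\pi_B$ is more detail than the paper provides, but it is exactly the mechanism underlying the cited propositions.
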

\begin{lemma}\label{lem:fullset2}
    Let $\mathcal V_1$, $\mathcal V_1'$, $\mathcal V_2$, $\mathcal V_2'$ be subspace arrangements over a field $\mathbb F$.
    Let $k$  be a non-negative integer.
    Let $B$ be a subspace of $\spn{\mathcal V_1\cup \mathcal V_2\cup \mathcal V_1'\cup \mathcal V_2'}$ 
    such that 
    $(\spn{\mathcal V_1}+B)\cap (\spn{\mathcal V_2}+B)=B$
    and 
    $(\spn{\mathcal V_1'}+B)\cap (\spn{\mathcal V_2'}+B)=B$.
    If $\FS(\mathcal V_1,B)=\FS(\mathcal V_1',B)$ and $\FS(\mathcal V_2,B)=\FS(\mathcal V_2',B)$, then 
    $\FS(\mathcal V_1\cup\mathcal V_2,B)=\FS(\mathcal V_1'\cup \mathcal V_2',B)$.
\end{lemma}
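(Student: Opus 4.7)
The plan is to mimic the derivation of Lemma~\ref{lem:fullset} from Propositions 35 and 36 of \cite{JKO2016}, but phrased for subspace arrangements rather than configurations. As explained in Section~\ref{subsec:graphtosubspace} and in \cite[Sections VII--VIII]{JKO2016}, the full-set machinery is set up in the generality of subspace arrangements, with the configuration version corresponding to the special case of one-dimensional subspaces. Consequently, the subspace-arrangement versions of Propositions 35 and 36 of \cite{JKO2016} apply directly here, and the argument becomes purely symbolic once we identify the right input.

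First I would reinterpret the hypothesis $(\spn{\mathcal V_1}+B)\cap(\spn{\mathcal V_2}+B)=B$ as the bottleneck assumption that controls the merge lemma of \cite{JKO2016}. Under this assumption, any linear layout of $\mathcal V_1\cup\mathcal V_2$ splits, by restriction, into layouts of $\mathcal V_1$ and $\mathcal V_2$ whose canonical $B$-trajectories can be interleaved back into the canonical $B$-trajectory of the original layout; conversely, any pair of realizable $B$-trajectories in $\mathcal V_1$ and $\mathcal V_2$ can be interleaved into a realizable $B$-trajectory in $\mathcal V_1\cup\mathcal V_2$. Taking compactifications and passing to downward closures under $\tle$, this yields a deterministic operator $\mu$ on pairs of subsets of $U_k(B)$ with
\[
\FS(\mathcal V_1\cup\mathcal V_2,B)=\mu\bigl(\FS(\mathcal V_1,B),\FS(\mathcal V_2,B)\bigr),
\]
and the analogous identity for $\mathcal V_1'\cup\mathcal V_2'$ using its own transversality hypothesis.

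With the operator $\mu$ in hand, the lemma follows in one line. Substituting $\FS(\mathcal V_i,B)=\FS(\mathcal V_i',B)$ for $i=1,2$ yields
\[
\FS(\mathcal V_1\cup\mathcal V_2,B)
=\mu\bigl(\FS(\mathcal V_1,B),\FS(\mathcal V_2,B)\bigr)
=\mu\bigl(\FS(\mathcal V_1',B),\FS(\mathcal V_2',B)\bigr)
=\FS(\mathcal V_1'\cup\mathcal V_2',B).
\]

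The main obstacle I expect is verifying that $\mu$ truly depends only on $B$ and the two input full sets, not on the ambient arrangements or the ambient vector spaces. The reason is that under the transversality condition, the $L$-, $R$-, and $\lambda$-components of a merged statistic at each time index can be written using only sums and intersections, inside $B$, of the corresponding components from the two side trajectories: nothing outside $B$ contributes to the connectivity increments along a merged layout. This is precisely the dimension bookkeeping carried out in Propositions 35 and 36 of \cite{JKO2016}, so the step reduces to a citation rather than a new calculation; the only care required is to observe that those proofs are already written at the level of subspace arrangements and never use one-dimensionality of the subspaces involved.
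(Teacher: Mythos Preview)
Your proposal is correct and matches the paper's own treatment: the paper does not give a proof of Lemma~\ref{lem:fullset2} at all, stating only that it is ``easily deduced from the result of Jeong, Kim, and Oum~\cite{JKO2016}'' and that Lemma~\ref{lem:fullset} is the special case for configurations. Your write-up simply makes explicit what the paper leaves implicit, namely that Propositions~35 and~36 of \cite{JKO2016} are already formulated for subspace arrangements and yield a merge operator depending only on $B$ and the two input full sets.
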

We can deduce the following lemma easily from Lemmas~\ref{lem:shrink2} and \ref{lem:fullset2}
by the same method of deducing 
Lemma~\ref{lem:fullsetkey} from Lemmas~\ref{lem:shrink} and \ref{lem:fullset}.
\begin{lemma}\label{lem:fullsetkey2}
    Let $k$ be a non-negative integer and let $\mathbb F$ be a field. 
    Let $\mathcal V$ be a subspace arrangement over $\mathbb F$
    and let $\mathcal V'$ be a subspace arrangement over $\mathbb F$.
    Let $(\mathcal V_1,\mathcal V_2)$ be a partition of $\mathcal V$
    and $(\mathcal V_1',\mathcal V_2')$ be a partition of $\mathcal V'$. 
    If there is a bijective linear transformation $\phi:\partial_{\mathcal V}(\mathcal V_1)\to \partial_{\mathcal V'}(\mathcal V_1')$ such that 
    \begin{align*}
        \phi(\FS(\mathcal V_1,\partial_{\mathcal V}(\mathcal V_1)))&=\FS(\mathcal V_1',\partial_{\mathcal V'}(\mathcal V_1')) \text{ and }\\
        \phi(\FS(\mathcal V_2,\partial_{\mathcal V}(\mathcal V_1)))&=\FS(\mathcal V_2',\partial_{\mathcal V'}(\mathcal V_1')),
    \end{align*}
    then 
    the path-width of $\mathcal V$ is at most~$k$ if
    and only if the path-width of $\mathcal V'$ is at most~$k$.
\end{lemma}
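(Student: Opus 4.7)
The plan is to mirror the proof of Lemma~\ref{lem:fullsetkey} verbatim, with Lemmas~\ref{lem:shrink2} and \ref{lem:fullset2} playing the roles of Lemmas~\ref{lem:shrink} and \ref{lem:fullset}. First I would move both arrangements into a common ambient vector space. Concretely, since $\phi : \partial_{\mathcal V}(\mathcal V_1) \to \partial_{\mathcal V'}(\mathcal V_1')$ is a linear bijection between subspaces of the same dimension, I would extend it to an injective linear map on the ambient space of $\mathcal V'$ so that after identification with its image I may assume $\phi$ is the identity on $B := \partial_{\mathcal V}(\mathcal V_1) = \partial_{\mathcal V'}(\mathcal V_1')$ and that $\mathcal V,\mathcal V'$ live in the same ambient space. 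Under this identification, the two hypotheses become
\[
\FS(\mathcal V_1, B) = \FS(\mathcal V_1', B) \quad\text{and}\quad \FS(\mathcal V_2, B) = \FS(\mathcal V_2', B).
\]

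Next I would check the hypothesis of Lemma~\ref{lem:fullset2}. By the definition of $\partial_{\mathcal V}(\mathcal V_1)$ as $\spn{\mathcal V_1}\cap \spn{\mathcal V_2}$, we have $B \subseteq \spn{\mathcal V_i}$ for $i=1,2$, so $\spn{\mathcal V_i}+B = \spn{\mathcal V_i}$, and therefore
\[
(\spn{\mathcal V_1}+B)\cap (\spn{\mathcal V_2}+B) = \spn{\mathcal V_1}\cap \spn{\mathcal V_2} = B,
\]
and similarly with the primed arrangements. Hence Lemma~\ref{lem:fullset2} applies and gives
\[
\FS(\mathcal V_1\cup \mathcal V_2, B) = \FS(\mathcal V_1'\cup \mathcal V_2', B).
\]
Now Lemma~\ref{lem:shrink2} promotes this equality from parameter $B$ down to the zero subspace, yielding
\[
\FS(\mathcal V,\{0\}) = \FS(\mathcal V',\{0\}).
\]

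Finally, from the definition of $\FS(\cdot,\{0\})$ as the set of compact $\{0\}$-trajectories of width at most $k$ dominating some realizable $\{0\}$-trajectory, and from the fact that a canonical $\{0\}$-trajectory of a linear layout has width equal to the width of the layout itself, we get directly that $\FS(\mathcal V,\{0\})\neq\emptyset$ if and only if $\mathcal V$ has a linear layout of width at most $k$, i.e.\ path-width at most $k$. Combining with the equality above gives the biconditional in the statement.

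There is no substantive obstacle here; the proof is a routine transcription of the proof of Lemma~\ref{lem:fullsetkey}. The only point requiring any care is the first step, namely arranging that $\phi$ becomes literally the identity so that the set equalities $\FS(\mathcal V_i,B)=\FS(\mathcal V_i',B)$ can be read inside a single ambient space before invoking Lemma~\ref{lem:fullset2}; this is handled exactly as in the original proof by extending $\phi$ to an injective linear map and passing to $V+V'$.
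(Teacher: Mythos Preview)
Your proposal is correct and follows exactly the approach the paper indicates: the paper does not write out a proof for this lemma but simply states that it follows from Lemmas~\ref{lem:shrink2} and~\ref{lem:fullset2} by the same method used to deduce Lemma~\ref{lem:fullsetkey}, and your argument is precisely that transcription. If anything, you are slightly more explicit than the paper's proof of Lemma~\ref{lem:fullsetkey} in verifying the hypothesis $(\spn{\mathcal V_1}+B)\cap(\spn{\mathcal V_2}+B)=B$ before invoking Lemma~\ref{lem:fullset2}.
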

\subsection{Proof for linear rank-width}

\begin{proposition}
    Let $G$ be a graph of linear rank-width larger than $k$. 

    Let 
    $\ell = 2^{2^{18(k+1)+2+(2k+2)(2k+1)+2(4k+3)2k}}+1$.  
    If $G$ has more than
    \[
        \left(\ell-1+\frac{2(\ell-2)}{\ell-3}\right) (\ell-2)^{k+1} - \frac{2(\ell-2)}{\ell-3},
    \] vertices,
    then $G$ has a proper pivot-minor $H$ whose linear rank-width is larger than $k$.
\end{proposition}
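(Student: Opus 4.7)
The plan is to mirror the proof of Proposition~\ref{prop:main} step by step, with the subspace arrangement $\mathcal V_G$ playing the role of a matroid configuration and Theorem~\ref{thm:oumlink} substituting for Tutte's linking theorem. First I would assume, toward a contradiction, that every proper pivot-minor of $G$ has linear rank-width at most~$k$, so that the linear rank-width of $G$ is exactly $k+1$. Applying Theorem~\ref{thm:nagamochi} to $\rho_G$ produces a linked linear layout $\sigma=v_1,\ldots,v_n$ of width $k+1$. Setting $a_i:=\rho_G(\{v_1,\ldots,v_i\})$, consecutive entries differ by at most one, and Lemma~\ref{lem:cutcount} yields $\ell$ indices $t_1<\cdots<t_\ell$ together with a common value $\theta\le k+1$ such that $\rho_G(S_i)=\theta$ for $S_i:=\{v_1,\ldots,v_{t_i}\}$; the linked property then forces $\rho_G(X)\ge\theta$ for every $S_1\subseteq X\subseteq S_\ell$.

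Next, I would invoke Theorem~\ref{thm:oumlink} on the outer pair $(S_1,V(G)\setminus S_\ell)$ to produce a pivot-minor realizing the minimum $\theta$, and pivot $G$ accordingly. Because a pivot preserves $\rho_G$, the layout and the cuts $S_i$ persist, and one may assume the induced subgraph $G[S_1\cup(V(G)\setminus S_\ell)]$ itself witnesses the outer linking. Proposition~\ref{prop:stronglinking2} is now available with $S=S_1$, $T=V(G)\setminus S_\ell$, $C=S_\ell\setminus S_1$. Let $\pi$ be the projection of $\F_2^{V(G)}$ modulo $\spn{I_C}$; part~(iv) of the proposition collapses every boundary $\partial_{S_i}$ of the binary matroid $M_G$ onto a common subspace $B$ of dimension $2\theta$, and each $\pi(\FST(\mathcal V_{S_i},\partial_{S_i}))$ sits inside $U_{2k}(B)$. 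Lemma~\ref{lem:numcompact} bounds $\abs{U_{2k}(B)}$, and the value of $\ell$ in the statement is exactly what is needed for the pigeonhole principle to return $1\le i<j\le\ell$ with $\pi(\FST(\mathcal V_{S_i},\partial_{S_i}))=\pi(\FST(\mathcal V_{S_j},\partial_{S_j}))$.

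I would then refine $\pi$ to $\phi$ modulo $\spn{I_{C'}}$ with $C':=S_j\setminus S_i\ne\emptyset$. Parts~(i)--(iii) of Proposition~\ref{prop:stronglinking2} make $\phi$ injective on $\spn{I_{S_i}\cup A_{S_i}}$ and on $\spn{I_{V(G)\setminus S_j}\cup A_{V(G)\setminus S_j}}$, and (iv) transports the pigeonhole equality to $\phi(\FST(\mathcal V_{S_i},\partial_{S_i}))=\phi(\FST(\mathcal V_{S_j},\partial_{S_j}))$, with $\phi(\partial_{S_i})=\phi(\partial_{S_j})=:B'$ of dimension $2\theta$. Setting $H:=G[S_i\cup(V(G)\setminus S_j)]$, which is a proper pivot-minor of the original graph since $C'\ne\emptyset$, the formulas $V_v=\spn{e_v,\sum_{u\in N_G(v)}e_u}$ from Subsection~\ref{subsec:graphtosubspace} show that $\phi$ sends $V_v$ to $\spn{e_v,\sum_{u\in N_H(v)}e_u}$ for each $v\in V(H)$, so that $\phi(\mathcal V_{S_i}\cup\mathcal V_{V(G)\setminus S_j})=\mathcal V_H$ after identifying the quotient with $\F_2^{V(H)}$. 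Using the two representations $\partial_{S_i}\subseteq\spn{\mathcal V_{S_i}}$ and $\partial_{S_j}\subseteq\spn{\mathcal V_{V(G)\setminus S_j}}$ of $B'$ yields $B'\subseteq\partial_{\mathcal V_H}(\mathcal V_{H,S_i})$; combined with $\dim\partial_{\mathcal V_H}(\mathcal V_{H,S_i})=2\rho_H(S_i)\le 2\theta=\dim B'$, this forces equality. Lemma~\ref{lem:fullsetkey2} then implies that $\mathcal V_H$ has path-width larger than $2k$, so $H$ has linear rank-width greater than $k$, contradicting the initial assumption.

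The hardest step will be the dual-boundary identification in the third paragraph: the matroid argument of Proposition~\ref{prop:main} obtains $N=M(A')$ directly from the construction $N=M(A)/C'\setminus D'$, whereas here I must verify both that modding out by $\spn{I_{C'}}$ correctly implements the passage to $\mathcal V_H$ and that $B'$ sits inside the right boundary despite the ambient quotient space containing the stray images $\phi(V_v)$ for $v\in C'$. Using $B'=\phi(\partial_{S_j})$ (rather than only $\phi(\partial_{S_i})$) is essential, as it is what puts $B'$ into $\spn{\mathcal V_{H,V(H)\setminus S_i}}$ in addition to $\spn{\mathcal V_{H,S_i}}$, and only then does the dimension comparison close the proof.
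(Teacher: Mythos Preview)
Your proposal is correct and follows essentially the same route as the paper's proof: linked layout via Theorem~\ref{thm:nagamochi}, repeated cuts via Lemma~\ref{lem:cutcount}, pivoting so that Theorem~\ref{thm:oumlink} yields $G[S\cup T]$ as the linking witness, Proposition~\ref{prop:stronglinking2} to control the projections $\pi$ and $\phi$, pigeonhole on full sets, and the comparison of $\mathcal V_G$ with $\mathcal V_H$ to reach a contradiction. Your final step invokes Lemma~\ref{lem:fullsetkey2} directly, whereas the paper unpacks it into Lemmas~\ref{lem:fullset2} and~\ref{lem:shrink2}; your explicit verification that $\phi(V_v)=\spn{e_v,\sum_{u\in N_H(v)}e_u}$ for $v\in V(H)$ makes transparent a point the paper leaves to the reader, but otherwise the arguments coincide.
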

\begin{proof}
    We may assume that $G$ has linear rank-width exactly $k+1$, because deleting a vertex decreases the linear rank-width by at most $1$. 
    Let us assume that $V(G)=\{1,2,\ldots,n\}$.

    By Theorem~\ref{thm:nagamochi}, there is a linked linear layout $\sigma$ of $G$ of width $k+1$.
    We may assume that $\sigma=1,2,\ldots,n$ by permuting vertices of $G$.
    For $i=0,1,2,\ldots,n$, let 
    $a_i=\rho_G(\{1,2,\ldots,i\})$. 
    Then $0\le a_i\le k+1$ for all $i$. 

    By Lemma~\ref{lem:cutcount}, there exist integers $0\le t_1<t_2<\cdots<t_\ell\le n$ and $0\le \theta\le k+1$ such that 
    $a_{t_1}=a_{t_2}=\cdots=a_{t_\ell}=\theta$ 
    and $a_i\ge \theta$ for all $t_1\le i\le t_\ell$.
    Let $S=\{1,2,\ldots,t_1\}$ and $T= \{t_\ell+1,t_\ell+2,\ldots,n\}$.

    By Theorem~\ref{thm:oumlink}, $G$ has a pivot-minor $G'$ on $S\cup T$ such that
    $\rho_{G'}(S) = \theta$. 
    Since pivoting does not change the cut-rank function, 
    we may assume that $G'$ is an induced subgraph of $G$ by applying pivots if necessary.

    Let $\mathcal V_G=\{ \spn{e_i,v_i}\}_{i\in V(G)}$ be the subspace arrangement as we constructed in Subsection~\ref{subsec:graphtosubspace}.
    Then $\mathcal V_G$ has path-width $2k+2$.
    Let $C= V(G)-(S\cup T)$.

    For $i=1,2,\ldots,\ell$, 
    let 
    \begin{align*}
        X_i&:=\{1,2,\ldots,t_i\}, 
        \\
        Y_i&:=\{t_i+1,t_i+2,\ldots,n\},\\ 
        \partial_i&:= \partial_{X_i}=\spn{I_{X_i}\cup A_{X_i}}\cap \spn{I_{Y_i}\cup A_{Y_i}}, \\
        \mathcal V_i&:= \{ \spn{ e_m, v_m}\}_{1\le m\le t_i}, \text{ and }\\
        \mathcal V_i'&:= \{ \spn{ e_m, v_m}\}_{t_i<m\le n}.
    \end{align*}
    By Lemma~\ref{lem:cutrank}, 
    $\dim \partial_i=2\theta$ for all $i=1,2,\ldots,\ell$.

    Let $\pi:\mathbb F_2^n \to \mathbb F_2^n / \spn{I_C}$ be the linear transformation that maps $x$ to an equivalence class containing $x$
    where two vectors are equivalent if their difference is in $\spn{I_C}$.
    Observe that if we identify $\mathbb F_2^n/\spn{I_C}$ with $\mathbb F_2^{n-\abs{C}}$ by ignoring coordinates indexed by $C$, then $\pi$ is a linear transformation that removes the coordinates indexed by $C$.
    Let $B=\pi( \partial_1)$. 

    By (iii) and (iv) of Proposition~\ref{prop:stronglinking2}, 
    $\dim B=2\theta$ and $\pi(\partial_i)=\pi(\partial_j)$ for all $1\le i<j\le \ell$.

    Observe that $\pi(\FST(\mathcal V_i,\partial_i))\subseteq U_{2k}(B)$.
    Since $\ell$ is big enough, 
    by Lemma~\ref{lem:numcompact} and the pigeon-hole principle, 
    there are $1\le i<j\le n$  such that 
    $\pi(\FST(\mathcal V_i,\partial_i))=\pi(\FST(\mathcal V_j,\partial_j))$. 

    Let $C'=\{ i+1,i+2,\ldots,j\}$. 
    Let $\phi:\mathbb F_2^n\to \mathbb F_2^n/\spn{I_{C'}}$ be the linear transformation that maps $x$ to an equivalence class containing $x$
    where two vectors are equivalent if their difference is in $\spn{I_{C'}}$.
    
    Let $B'=\phi(\partial_i)$.
    By (iii) of Proposition~\ref{prop:stronglinking2}, 
    $\dim B'=\dim \partial_i=2\theta$ because $C'\subseteq C$.
    By (iv) of Proposition~\ref{prop:stronglinking2}, 
    for $x\in \partial_i$ and $y\in \partial_j$, 
    \[ \pi(x)=\pi(y)  \text{ if and only if }\phi(x)=\phi(y).\]
    Therefore $B'=\phi(\partial_j)$
    and $\phi(\FST(\mathcal V_i,\partial_i))=\phi(\FST(\mathcal V_j,\partial_j))$.

    We claim that $\phi$ is an injection on $\spn{I_{X_i}\cup A_{X_i}}$.
    Suppose that $x,y\in \spn{I_{X_i}\cup A_{X_i}}$ and $x-y\in \spn{I_{C'}}$.
    Since $\spn{I_{C'}}\subseteq \spn{I_C}$, by (i) of Proposition~\ref{prop:stronglinking2}, 
    we deduce that $x-y\in \spn{I_{C\cap X_i}}$. 
    Since $C\cap X_i\subseteq X_i$ and $C'\subseteq Y_i$, we deduce that $x-y\in \partial_i$.
    By (iii) of Proposition~\ref{prop:stronglinking2}, 
    we have $x=y$ and therefore $\phi$ is an injection on  $\spn{I_{X_i}\cup A_{X_i}}$.
    By symmetry, we also deduce that $\phi$ is an injection on $\spn{I_{Y_i}\cup A_{Y_i}}$.

    Let $H=G-C_i$. 
    Since $B'\subseteq \spn{\phi(I_{X_i}\cup A_{X_i})}$ and $B'\subseteq \spn{\phi(I_{Y_j}\cup A_{Y_j})}$, 
    we have $B'\subseteq \spn{ \phi(I_{X_i}\cup A_{X_i})}
    \cap \spn{\phi(I_{Y_j}\cup A_{Y_j})}$.
    Since $\dim \spn{ \phi(I_{X_i}\cup A_{X_i})}
    \cap \spn{\phi(I_{Y_j}\cup A_{Y_j})} = 2\rho_H(X_i)\le 2\rho_G(X_i)=2\theta$
    and $\dim B'\ge \dim \partial_i =2\theta$, 
    we deduce that $B'= \spn{ \phi(I_{X_i}\cup A_{X_i})}
    \cap \spn{\phi(I_{Y_j}\cup A_{Y_j})}$.

    Since $\phi$ is an injection on $\spn{I_{X_i}\cup A_{X_i}}$, 
    \[ 
        \FST(\phi(\mathcal V_i),B')=\phi(\FST(\mathcal V_i,\partial_i))=\phi(\FST(\mathcal V_j,\partial_j)).
    \] 
    Since $\phi$ is an injection on $\spn{I_{Y_j}\cup A_{Y_i}}$, 
    we have 
    \[
        \FST(\phi(\mathcal V_j'),B')=\phi(\FST(\mathcal V_j',\partial_j)).
    \]
    By Lemma~\ref{lem:fullsetkey2}, 
    \( \FST(\phi(\mathcal V_i\cup\mathcal  V_j'),B')= \phi(\FST(\mathcal V_j\cup\mathcal V_j',\partial_j))=\phi(\FST(\mathcal V,\partial_j))\).
    By Lemma~\ref{lem:shrink2}, 
    \( 
        \FST(\phi(\mathcal V_i\cup\mathcal  V_j'),\{0\})= \phi(\FST(\mathcal V,\{0\}))
    \). 

    Since $H$ is a proper induced subgraph of $G$, the linear rank-width of $H$ is at most~$k$.  Note that $\phi$ is a linear transformation that omits coordinates corresponding to elements of $C'$ if we identify $\mathbb F_2^n/\spn{I_{C'}}$ with $\mathbb F_2^{n-\abs{C'}}$
    and therefore
    $\FST(\phi(\mathcal V_i\cup \mathcal V_j'),\{0\})$ is precisely the full set arising from the computation of the linear rank-width of $H$.
    Since $H$ has linear rank-width at most~$k$, 
    $\FST(\phi(\mathcal V_i\cup \mathcal V_j'),\{0\})$ is non-empty. 
    This implies that $\phi(\FST(\mathcal V,\{0\}))$ is non-empty, and so $\mathcal V$ has path-width at most $2k$
    and $G$ has linear rank-width at most~$k$, contradicting the assumption. 
\end{proof}

\end{document}